\def\RR{I\kern-0.35em R\kern0.2em \kern-0.2em}
\def\BBN{I\kern-0.35em  N\kern0.2em \kern-0.2em}
\newcommand{\be}{\begin{equation}}
\newcommand{\ee}{\end{equation}}
\newcommand{\bea}{\begin{eqnarray}}
\newcommand{\eea}{\end{eqnarray}}
\newcommand{\beann}{\begin{eqnarray*}}
\newcommand{\eeann}{\end{eqnarray*}}
\newtheorem{thm}{Theorem}[section]
\newtheorem{prop}[thm]{Proposition}
\newtheorem{lem}[thm]{Lemma}
\newtheorem{rem}[thm]{Remark}
\newtheorem{cor}[thm]{Corollary}
\begin{document}
\begin{frontmatter}
\baselineskip=15pt

\title{Error analysis of a class of semi-discrete schemes for solving the Gross-Pitaevskii equation at low regularity}

\author[1]{Yvonne Alama Bronsard}

\address{LJLL (UMR 7598), Sorbonne Universit\'e}
\ead{yvonne.alama\_bronsard@sorbonne-universite.fr}


\begin{abstract} 
We analyze a class of time discretizations for solving the nonlinear Schr\"odinger equation with non-smooth potential and at low-regularity on an arbitrary Lipschitz domain $\Omega \subset \mathbb{R}^d$, $d \le 3$.
We show that these schemes, together with their optimal local error structure, allow for convergence under lower regularity assumptions on both the solution and the potential than is required by classical methods, such as splitting or exponential integrator methods. Moreover, we show first and second order convergence in the case of periodic boundary conditions, in any fractional positive Sobolev space $H^{r}$, $r \ge 0$, beyond the more typical $L^2$ or $H^\sigma (\sigma>\frac{d}{2}$) -error analysis. Numerical experiments illustrate our results.
\end{abstract}
\end{frontmatter}

\setcounter{tocdepth}{2}
\setcounter{secnumdepth}{4}
\section{Introduction}
We consider the Gross-Pitaevskii (GP) equation
\be \label{evGP}
i  \partial_t u(t,x) = - \Delta u(t,x) + V(x)u(t,x)  + \vert u(t,x)\vert^2 u(t,x),\quad (t,x) \in \mathbb{R}\times  \Omega 
\ee
with
$\Omega \subset \mathbb{R}^d$, $d \le 3$, and an initial condition
\begin{equation}
\label{init}
u_{|t=0}= u_{0}.
\end{equation}
When $\partial \Omega \neq \emptyset$, we assign boundary conditions which will be encoded in the choice of the domain of the operator $\mathcal{L} = i\Delta$. We recall that the linear operator $\mathcal{L} = i\Delta$ generates a group $\{e^{t\mathcal{L}}\}_{t \in \mathbb{R}}$ of unitary operators on $L^2(\Omega)$. We will deal with mild solutions of the initial value problem \eqref{evGP} and \eqref{init} which are given by Duhamel's formula;
\be \label{Duhamel}
u(t) = e^{i t \Delta } u_0 + \int_0^t e^{i(t-\zeta) \Delta} f(u,\bar{u}, V)(\zeta,x) d\zeta
\ee
where we denote the nonlinearity by 
\be\label{nonlin}
f(u,\bar{u}, V)(t,x) = -i(V(x)u(t,x)  + u^2(t,x)\bar{u}(t,x)).
\ee

Throughout this article we will be interested in studying numerical schemes which approximate the time dynamics of \eqref{evGP} at low-regularity, by means of appropriate approximations of Duhamel's formula. Namely, we are interested in providing a reliable approximation of \eqref{evGP} (or equivalently of \eqref{Duhamel}) when the initial data $u_0$ and the potential $V$ are non-smooth, in the sense that they belong to Sobolev spaces of low order. 

One setting for the Gross-Pitaevskii equation is to describe the dynamics of Bose-Einstein condensates in a potential trap. In many physically relevant situations the potential is assumed to be rough or disordered, and hence the study of equation \eqref{evGP} in this non-smooth or low-regularity framework is of physical interest (\cite{Disorder1}, \cite{physics2}).

Recently much progress has been made in the development of low-regularity approximations to nonlinear evolution equations. First, in the case of periodic boundary conditions a class of schemes called {\it Fourier integrators} \cite{ORS1} or {\it resonance based schemes} \cite{BS} were introduced to approximate the time dynamics of dispersive equations such as NLS, KdV, and Klein-Gordon (see \cite {HofS}, \cite{OS1} , \cite{CS_KG}). 
Recently, higher order extensions of these {\it resonance based schemes} were introduced in \cite{BS} for approximating in a unified fashion a large class of dispersive equations with periodic boundary conditions.
These {\it resonance based schemes} were shown to converge in a more general setting, namely under lower regularity assumptions, than classical methods required (see \cite{OS1}, \cite{ORS1} and references therein for a comparative analysis).
The name of these schemes is due to their construction which revolves around Fourier based expansions of the solution and of the resonant structure of the equation. We explain the idea behind these resonance based schemes in detail in Section \ref{periodic}.
These ideas were then extended in \cite{RS} to treat more general domains $\Omega \subset \mathbb{R}^d$ and boundary conditions and allow to deal with a class of parabolic, hyperbolic and dispersive equations in a unified fashion. The resulting schemes were termed {\it low-regularity integrators}, or {\it Duhamel's integrators} (see \cite{RS}). A next natural step in this study of low-regularity approximations to nonlinear PDEs is to introduce a potential term $uV$ with minimal regularity assumptions on the solution $u$ and the potential $V$. The first order low-regularity integrator for equation \eqref{evGP} was first stated in the report \cite{OberwolfachReport}, which also provides a preliminary discussion of previous results on low-regularity integrators for solving evolution equations. 
The goal of this article is to study first and second low-regularity schemes for equation \eqref{evGP}, with an emphasis on the error analysis. 
In a subsequent work \cite{ABBS} we present a general framework for deriving low-regularity schemes up to arbitrary order, using new techniques based on decorated trees series analysis to extend the construction of the schemes presented in this article.

In this article we study a class of low-regularity integrators to solve the Gross-Pitaevskii equation \eqref{evGP} on an arbitrary domain $\Omega \subset \mathbb{R}^d$.  
In the case where the domain is a torus $\mathbb{T}^d$,
we state and prove first and second order convergence in any fractional positive Sobolev space $H^{r}$, under moderate regularity assumptions on both the solution $u$ and the potential $V$. These are stronger convergence results than the more typical $L^2$ or $H^\sigma$ ($\sigma>\frac{d}{2}$) -convergence analysis, and apply to the nonlinear Schr\"odinger equation as an immediate consequence.
 In future work we will extend the Sobolev error estimates in the case of bounded smooth Lipschitz domains with homogeneous Dirichlet boundary conditions. We state our results in the next subsections, starting with the first order scheme and following with the second order scheme.
%
%
%
%
\subsection{First-order low regularity integrator}
In Section \ref{general1} we construct the following first order low-regularity integrator on $\Omega$, which was first stated in \cite{OberwolfachReport}.
For $n \ge 0$, we define,
\be\label{first-order}
u^{n+1} = \Phi^\tau_{num,1}(u^n) := e^{i\tau \Delta}[ u^n - i \tau(u^n \varphi_1(-i \tau \Delta ) V + (u^n)^2 \varphi_1(-2i \tau\Delta)\bar{u}^n )], \quad \text{where} \ u^0 = u_0,
\ee
and $\varphi_1(z) = \frac{e^z -1 }{z}$ is a bounded operator on $i\mathbb{R}$. 
The construction of this scheme does not rely on Fourier based techniques, 
%
and hence one can couple the above time discretization not only with spectral methods but with more general types of spatial discretizations. Indeed, on general domains one can call upon Krylov space methods for the approximation of the matrix exponential $e^{it\Delta}$, and the action of the $\varphi_1(\cdot)$ functions (see \cite{KIOPS} and \cite{ExpInt}). The fully discrete analysis on a smooth bounded domain with homogeneous Dirichlet conditions and with a finite elements space discretization, is the objective of future work.
%
Let us mention that a different construction of a low-regularity scheme for \eqref{evGP} based on tree series analysis is presented in the recent work \cite{ABBS} to obtain related low-regularity schemes.

We prove in Section \ref{Gerr1_section} the following first order convergence result for the scheme \eqref{first-order}, in the case of periodic boundary conditions. For the local-wellposedness result of \eqref{evGP} and \eqref{init} we refer to Theorem \ref{LWP} given in Section \ref{notation}.
\begin{thm}\label{Global1}
Let $T>0$, $r \ge 0$, 
and
\be\label{r_1}
r_1 := 
\begin{cases}
r + 1, \ \ \text{if} \ r > \frac{d}{2},\\
1+ \frac{d}{2} + \epsilon, \ \text{if} \ 0 < r \le \frac{d}{2},\\
1 + \frac{d}{4},  \ \ \text{if} \ r = 0,\\ 
\end{cases}
\ee
where $0 < \epsilon < \frac{1}{4}$ can be arbitrarily small. For every $u_0\in H^{r_1}(\mathbb{T}^d)$  and $V \in H^{r_1}(\mathbb{T}^d)$, let $u\in \mathcal{C}([0,T], H^{r_1}(\mathbb{T}^d))$ be the unique solution of \eqref{evGP}. Then there exists $\tau_{\min} > 0$ and $C_T > 0 $ such that for every time step size $\tau \le \tau_{\min}$ the numerical solution $u^n$ given in equation \eqref{first-order} has the following error bound:
\begin{align*} 
 \|u^n-u(n\tau)\|_{H^{r}}
\le C_T \tau, \quad 0 \le n\tau \le T.
\end{align*}
\end{thm}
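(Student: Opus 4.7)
The plan is the classical \emph{local error plus stability} strategy (Lady Windermere's fan). Denote by $\Phi_{ex}^\tau$ the exact flow of \eqref{evGP} over one step of size $\tau$. I will establish, for $v,w$ bounded in $H^{r_1}$,
a local error bound $\|\Phi^\tau_{num,1}(v) - \Phi^\tau_{ex}(v)\|_{H^r} \le C\tau^2$ and a conditional stability bound $\|\Phi^\tau_{num,1}(v) - \Phi^\tau_{num,1}(w)\|_{H^r} \le (1 + C\tau)\|v-w\|_{H^r}$, where $C$ depends on $\|v\|_{H^{r_1}}$, $\|w\|_{H^{r_1}}$, and $\|V\|_{H^{r_1}}$. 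Telescoping $n \le T/\tau$ steps then yields the claimed $O(\tau)$ global estimate. A side bootstrap, applying stability at the higher level $H^{r_1}$ together with the local wellposedness result (Theorem \ref{LWP}), keeps the numerical iterates uniformly bounded in $H^{r_1}$ so that the constants do not blow up.

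For the local error, I start from Duhamel \eqref{Duhamel} and perform two successive approximations. First I freeze $u(\zeta)\approx e^{i\zeta\Delta}v$ in the integrand, producing an error of the form $\int_0^\tau e^{i(\tau-\zeta)\Delta}\bigl[f(u,\bar u,V)(\zeta) - f(e^{i\zeta\Delta}v,e^{-i\zeta\Delta}\bar v,V)\bigr]d\zeta$. Using $\|u(\zeta) - e^{i\zeta\Delta}v\|_{H^{r_1}} \lesssim \zeta$ (from Duhamel applied at the $H^{r_1}$ level plus Theorem \ref{LWP}) this piece is $O(\tau^2)$ in $H^r$ via standard bilinear/trilinear estimates. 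Next, I use the factorizations
$e^{i(\tau-\zeta)\Delta}(V\,e^{i\zeta\Delta}v) = e^{i\tau\Delta}\bigl[e^{-i\zeta\Delta}(V\,e^{i\zeta\Delta}v)\bigr]$ and
$e^{i(\tau-\zeta)\Delta}\bigl((e^{i\zeta\Delta}v)^2 e^{-i\zeta\Delta}\bar v\bigr) = e^{i\tau\Delta}\bigl[e^{-i\zeta\Delta}\bigl((e^{i\zeta\Delta}v)^2 e^{-i\zeta\Delta}\bar v\bigr)\bigr]$,
and approximate the brackets by $(e^{-i\zeta\Delta}V)\cdot v$ and $v^2\,e^{-2i\zeta\Delta}\bar v$ respectively. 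Integration over $\zeta\in[0,\tau]$ recovers the scheme \eqref{first-order} through the identity $\tau\varphi_1(-i k\tau\Delta) = \int_0^\tau e^{-iks\Delta}ds$ for $k=1,2$. The residual in each bracket is a \emph{commutator-type} expression of the form $e^{-i\zeta\Delta}(fg) - f(e^{-i\zeta\Delta}g)$, which on $\mathbb{T}^d$ admits a bound $\lesssim \zeta\,\|f\|_{H^{r_1}}\|g\|_{H^{r_1}}$ provided $r_1$ is chosen so that $H^{r_1}$ controls both the required derivatives and the product into $H^r$; this is exactly the role of the three cases in the definition of $r_1$.

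The stability bound is more routine: writing $\Phi^\tau_{num,1}(v) - \Phi^\tau_{num,1}(w)$ as $e^{i\tau\Delta}(v-w)$ plus terms containing differences of linear and cubic expressions in $(v,w)$ multiplied by bounded operators $\tau\varphi_1(-ik\tau\Delta)$, and using either the Kato--Ponce inequality together with the algebra property of $H^r$ (when $r>d/2$), or Sobolev embedding $H^{r_1-1}\hookrightarrow L^\infty$ together with $\|fg\|_{H^r}\lesssim \|f\|_{H^r}\|g\|_{L^\infty} + \|f\|_{L^\infty}\|g\|_{H^r}$ (when $0<r\le d/2$), yields the factor $1+C\tau$.

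The main obstacle is the local error analysis, and in particular the two low-regularity cases $0<r\le d/2$ and $r=0$. For $r>d/2$ the algebra property makes the product estimates painless and one derivative of $V$ or $v$ suffices to gain the extra $\tau$ in each commutator, which explains $r_1=r+1$. For $0<r\le d/2$ one must lose the algebra property and instead buy an embedding into $L^\infty$ for the ``non-differentiated'' factor, forcing $r_1 \ge 1+\frac{d}{2}+\epsilon$. The $r=0$ case is the most delicate: controlling a product in $L^2$ while extracting the $\tau$ gain from the commutator needs to distribute one derivative and still keep one factor in $L^\infty$; here I expect to invoke a Strichartz-type refinement or a Bernstein-type splitting to push $r_1$ down to $1+\frac{d}{4}$. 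This is where I anticipate the most care in tracking exponents and where the asymmetry between the quadratic (potential) and cubic (nonlinearity) contributions appears most visibly.
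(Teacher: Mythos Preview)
Your overall architecture (local error $+$ stability, Lady Windermere's fan) matches the paper, and your treatment of the local error by first freezing $u(\zeta)\approx e^{i\zeta\Delta}v$ and then isolating a commutator $e^{-i\zeta\Delta}(fg)-f\,e^{-i\zeta\Delta}g$ is exactly what the paper does via its filtered function $\mathcal N$; see Corollary~\ref{first-order-cor} and Proposition~\ref{LocalS1}. However, there is a genuine gap in your bootstrap step when $r\le d/2$.

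You propose to keep the constants finite by running a ``side bootstrap at the higher level $H^{r_1}$''. This does not close. The stability constant indeed depends only on an $H^\sigma$ bound with some $\sigma>d/2$, and $r_1>d/2$ is available, but to propagate an a~priori bound $\|u^n\|_{H^{r_1}}\le 2M$ you need a local error estimate in $H^{r_1}$ of order $O(\tau^{1+\delta})$ for some $\delta>0$. Under the hypothesis $u\in C([0,T],H^{r_1})$ the local error in $H^{r_1}$ is only $O(\tau)$ (the commutator estimate that gains the extra power of $\tau$ costs one derivative, so an $O(\tau^2)$ bound in $H^{r_1}$ would require $u\in H^{r_1+1}$, which is not assumed). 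With an $O(\tau)$ local error the telescoped bound on $\|u^n-u(t_n)\|_{H^{r_1}}$ is merely $O(1)$ and does not become small with $\tau$, so the induction hypothesis $\|u^n\|_{H^{r_1}}\le 2M$ cannot be recovered. The paper circumvents this by bootstrapping not in $H^{r_1}$ but in an \emph{intermediate} space $H^{\sigma_0}$ with $d/2<\sigma_0<r_1$: it interpolates the $O(\tau^2)$ local error in $H^r$ against the crude $O(\tau)$ local error in $H^{r_1}$ to obtain $\|\mathcal R_1\|_{H^{\sigma_0}}\le C\tau^{2-\theta}$ for some $\theta\in(0,1)$, which is enough to close the bootstrap in $H^{\sigma_0}$ (see \eqref{global-estim3}--\eqref{local-r_1}). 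This interpolation step is the missing idea in your outline.

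A second, smaller point: for $r=0$ you anticipate needing a Strichartz-type refinement or Bernstein splitting to reach $r_1=1+d/4$. In fact no dispersive input is needed. The commutator $\mathcal C[f,i\Delta]$ is a sum of products of two gradients (times a bounded factor), and the paper bounds it in $L^2$ simply by H\"older and the Sobolev embedding $H^{d/4}\hookrightarrow L^4$:
\[
\|\nabla v\cdot\nabla w\|_{L^2}\le \|\nabla v\|_{L^4}\|\nabla w\|_{L^4}\lesssim \|v\|_{H^{1+d/4}}\|w\|_{H^{1+d/4}}.
\]
This elementary embedding, not Strichartz, is what produces the exponent $1+d/4$.
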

Before moving on to the second order scheme and its convergence result we make a few remarks on the regularity assumptions made in the above theorem. 
A consequence of Theorem \ref{Global1} is that for any initial data and potential in $H^{r+1}(\Omega)$ where $r> \frac{d}{2}$ and $\Omega = \mathbb{T}^d$ (or on the full space $\Omega = \mathbb{R}^d$) we have the following global error estimate:
\begin{align*} 
\max_{1\le n\tau\le T} \|u^n-u(t_n)\|_{H^{r}} 
\le C(\sup_{[0,T]}||u(t) ||_{H^{r+1}}, ||V||_{H^{r+1}}) \tau.
\end{align*}
Namely we only ask one additional Sobolev derivative on the initial data $u_0$ and the potential $V$ in order to obtain first-order convergence of our low-regularity scheme \eqref{first-order}. This is due to the favorable local error structures that these low-regularity schemes inherit. See (\cite{OS1}, \cite{BS}, \cite{RS}), and references therein for an in depth comparative analysis of these low-regularity schemes with classical methods such as splitting methods, or exponential integrator methods.

Secondly, in the case $0<r\le\frac{d}{2}$ the convergence analysis in $H^r$-norm of a time discretization of equation \eqref{evGP} has not to our knowledge previously been studied, and these are the first convergence results in this regime.

Finally, when $r=0$, we consider the regularity assumptions required for an $L^2$-error analysis, and compare them with the existing $L^2$ convergence results for the Gross-Pitaevskii equation \eqref{evGP}. When $r = 0$, Theorem \ref{Global1} states,
\begin{equation*}
||u(t_{n}) - u^{n}||_{{L^2} }
\le C(\sup_{[0,T]}||u(t)||_{H^{{1 + \frac{d}{4}}}}, ||V||_{H^{{1 + \frac{d}{4}}}}) {\tau}, \quad 0 \le n\tau \le T.
\end{equation*}
To our knowledge, this is the first convergence result of this type with low-regularity assumptions on both the solution $u(t)$ and the potential $V$. Indeed, in the literature $L^2$-convergence results have been established for smooth potentials. See, for example \cite{LubichGP}, where the authors showed first-order convergence of a Lie splitting scheme for the linear Schr\"odinger equation with a potential term $uV$ where they require $V$ to be a $C^5$-smooth potential. 
The authors of \cite{Peterseim} were able to show first-order convergence to \eqref{evGP} of a Crank-Nicholson scheme for a rough, discontinuous potential $V$, which is of physical relevance in the context of Bose-Einstein condensates. Namely, for $V = V_{d} + V_{s} $, where $V_s \in C^{\infty}_0(\Omega)$ is smooth perturbation of a disordered potential $V_d \in L^{\infty}(\Omega)$, the authors obtained first order convergence of their scheme under -among other assumptions- $u_{t} \in L^2(0,T;H^2(\Omega))$, \cite[Theorem 4.1]{Peterseim}. Further, as detailed in \cite[Appendix A]{Peterseim}, due to the roughness of $V_d$ the highest regularity assumption one can hope for on the solution is $u(t) \in H^2(\Omega)$, and this regularity is required for their error analysis.
%
%
In contrast to these results Theorem \ref{Global1} permits low-regularity assumptions simultaneously on {\it both} $u(t)$ and $V$.
Finally, we refer to \cite[Corollary 20]{RS} where the authors show first order convergence in $L^2(\Omega)$ of a low-regularity scheme for the nonlinear Schr\"odinger equation (NLS) 
while analogously asking for $1+\frac{d}{4}$ Sobolev regularity on the initial data.
%
%
%
\subsection{Second-order low regularity integrator}
In [Section \ref{second}, Corollary \ref{cor:2scheme_FD}] we derive the following second order low-regularity integrator on $\Omega$.
For $n \ge 0$, we define,
\begin{align}\label{second-order}
u^{n+1} = \Phi^\tau_{num,2}(u^n)
&:= e^{i\tau \Delta}u^n - i \tau e^{i\tau \Delta}\left(u^n \varphi_1(-i \tau \Delta ) V + (u^n)^2 \varphi_1(-2i \tau\Delta)\bar{u}^n \right) \\\nonumber
&\quad -i\tau \left((e^{i\tau \Delta}u^n) \varphi_2(-i \tau \Delta ) (e^{i\tau \Delta}V) + (e^{i\tau \Delta}u^n)^2 \varphi_2(-2i\tau \Delta) e^{i\tau \Delta}\bar{u}^n \right)\\\nonumber
&\quad +i\tau e^{i\tau \Delta}\left(u^n \varphi_2(-i \tau \Delta ) V + (u^n)^2\varphi_2(-2i\tau \Delta)\bar{u}^n \right) \\ \nonumber
&\quad-\frac{\tau^2}{2} e^{i\tau \Delta}(|u^n|^4u^n + 3u^n |u^n|^2V - |u^n|^2u^n\bar{V} + u^nV^2 ),
\end{align}
where $\varphi_2(z) = \frac{e^z -\varphi_1(z)}{z}$ is a bounded operator on $i\mathbb{R}$. 
We present in [Section \ref{sec:stab2}, Corollary \ref{cor:stab2}] yet another derivation of a low-regularity second order scheme for \eqref{evGP}. We also offer in \cite{ABBS} a similar low-regularity scheme as above, using a different construction based on tree series analysis. As for the first order scheme, the above time discretization can be coupled with various spatial discretizations such as with finite elements.

We prove in Section \ref{Gerr2_section} the following second order convergence result for the scheme \eqref{second-order}, in the case of periodic boundary conditions.
\begin{thm}\label{Global2}
Let $T>0$, $r \ge 0 $, and 
\begin{equation}\label{r_2}
r_2 := 
\begin{cases}
r + 2, \ \ \text{if} \ r > \frac{d}{2},\\
2+ \frac{d}{2} + \epsilon, \ \text{if} \ 0 < r \le \frac{d}{2},\\
2 + \frac{d}{4},  \ \ \text{if} \ r = 0,\\ 
\end{cases}
\end{equation}
where $0 < \epsilon < \frac{1}{4}$ can be arbitrarily small. For every $u_0\in H^{r_2}(\mathbb{T}^d)$  and $V \in H^{r_2}(\mathbb{T}^d)$, let $u\in \mathcal{C}([0,T], H^{r_2}(\mathbb{T}^d))$ be the unique solution of \eqref{evGP}. Then there exists $\tau_{\min} > 0$ and $C_T > 0 $ such that for every time step size $\tau \le \tau_{\min}$ the numerical solution given in equation \eqref{second-order} has the following error bound:
\begin{align} 
\max_{1\le n\le N} \|u_{\tau}^n-u(n\tau)\|_{H^{r}}
\le C_T \tau^2.
\end{align}
\end{thm}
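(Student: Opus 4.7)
The proof follows a standard Lady Windermere's fan argument: I would combine a one-step local error estimate of size $O(\tau^3)$ in $H^r$ with a discrete $H^r$-stability estimate for $\Phi^\tau_{num,2}$, and sum over $n \le T/\tau$ steps to reach the global $O(\tau^2)$ bound. The local well-posedness result (Theorem \ref{LWP}) provides a uniform $H^{r_2}$-bound on $u(t)$ over $[0,T]$, and a short bootstrap induction fueled by the local error estimate itself propagates a uniform $H^{r_2}$-bound on the numerical iterates $u^n$ for $\tau \le \tau_{\min}$. This ambient bound is what allows the nonlinear product estimates below to close.

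For the local error, I would pass to the twisted variable $v(t) = e^{-it\Delta} u(t)$, whose time derivative reads $\partial_t v = -i e^{-it\Delta}(Vu + |u|^2 u)$, and iterate Duhamel's formula twice. The first iteration produces the $\varphi_1$-corrections appearing in \eqref{second-order}; the second iteration, applied to the factor $e^{-is\Delta}(Vu + |u|^2 u)$ inside the integral, yields both the $\varphi_2$-corrections and the explicit $\tau^2$ polynomial, which one recognises as $-\frac{\tau^2}{2} e^{i\tau\Delta} \partial_t(Vu + |u|^2 u)\big|_{t_n}$ after using \eqref{evGP} to absorb the $\Delta u$ contributions into the $\varphi_2$ terms. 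The resulting local remainder is a double oscillatory integral whose integrand is a polynomial of degree at most five in $u,\bar u, V$ carrying a $\tau^3$ prefactor.

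The closure of both the local error and the stability estimate rests on the same product estimates, which split into three regimes matching \eqref{r_2}. When $r > d/2$, $H^r$ is an algebra and all products close with $r_2 = r+2$, the two extra derivatives coming from the second-order Taylor remainder of the Duhamel iteration. When $0 < r \le d/2$, I would invoke the module inequality $\|fg\|_{H^r} \lesssim \|f\|_{H^r}\|g\|_{H^{d/2+\epsilon}}$, a consequence of $H^{d/2+\epsilon} \hookrightarrow L^\infty$, to place one factor in the higher-regularity slot, giving $r_2 = 2+d/2+\epsilon$. When $r = 0$, I would exploit the critical embedding $H^{d/4}(\mathbb{T}^d) \hookrightarrow L^4(\mathbb{T}^d)$ valid for $d \le 3$, and estimate each remainder term via Hölder as $\|fgh\|_{L^2} \lesssim \|f\|_{L^2}\|g\|_{H^{d/4}}\|h\|_{H^{d/4}}$, which together with the two derivatives for the remainder itself yields $r_2 = 2+d/4$. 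The stability bound $\|\Phi^\tau_{num,2}(v) - \Phi^\tau_{num,2}(w)\|_{H^r} \le (1+C\tau)\|v-w\|_{H^r}$ follows from the same product estimates applied to the Lipschitz constants of $Vu + |u|^2 u$ and of the explicit $\tau^2$-correction, combined with the $L^2$-isometry of $e^{i\tau\Delta}$ and the $\tau$-uniform $H^r$-boundedness of $\varphi_1,\varphi_2$ (which are bounded on $i\mathbb{R}$, hence act boundedly on every $H^r$ by spectral calculus for $-i\Delta$). A discrete Gronwall argument then closes the iteration $\|u^{n+1} - u(t_{n+1})\|_{H^r} \le (1+C\tau)\|u^n - u(t_n)\|_{H^r} + C\tau^3$ into the claimed $O(\tau^2)$ bound.

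The main technical obstacle is the $r=0$ regime, where the absence of the algebra property forces a careful per-term redistribution of derivatives across the quintic remainder: for each monomial one must verify that the most derivative-hungry factor requires no more than $2+d/4$ derivatives, with the remaining factors absorbed via the critical $H^{d/4} \hookrightarrow L^4$ embedding. Performing this bookkeeping consistently across both the $Vu$ and $|u|^2 u$ contributions of the nonlinearity, through the two Duhamel iterations and the four distinct blocks of the scheme \eqref{second-order}, is where most of the analytic care will be spent.
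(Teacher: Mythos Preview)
Your outline is correct in the regime $r>\frac{d}{2}$, where $H^r$ is an algebra and the stability constant $L_n$ depends only on $\|u^n\|_{H^r}$; there the standard Lady Windermere bootstrap closes exactly as you describe, and this is also what the paper does.

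The gap is in the regime $r\le \frac{d}{2}$. Your stability estimate $\|\Phi^\tau_{num,2}(v)-\Phi^\tau_{num,2}(w)\|_{H^r}\le (1+C\tau)\|v-w\|_{H^r}$ necessarily has $C=C(\|u^n\|_{H^\sigma},\|u(t_n)\|_{H^\sigma},\|V\|_{H^\sigma})$ for some $\sigma>\frac{d}{2}$, because the product inequality $\|fg\|_{H^r}\lesssim\|f\|_{H^r}\|g\|_{H^\sigma}$ that you invoke places one factor in $H^\sigma$. To run Gronwall you therefore need $\sup_{n\tau\le T}\|u^n\|_{H^\sigma}<\infty$, and this is \emph{not} delivered by your ``short bootstrap induction fueled by the local error estimate itself'': that local error estimate lives in $H^r$, not in $H^\sigma$ or $H^{r_2}$. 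A direct bootstrap in $H^{r_2}$ fails too, since the crude local error there is only $O(\tau)$, the accumulated error over $T/\tau$ steps is $O(1)$, and the induction does not close for general $T$.

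The paper resolves this with an interpolation step you are missing: one proves a crude bound $\|\mathcal{R}^1_2(\tau,t_n)\|_{H^{r_2}}\le C_T\tau$ (no extra regularity needed, just estimate each term of the scheme and of Duhamel separately), interpolates it against the sharp bound $\|\mathcal{R}^1_2(\tau,t_n)\|_{H^{r}}\le C_T\tau^3$ to obtain $\|\mathcal{R}^1_2(\tau,t_n)\|_{H^{\sigma_0}}\le C_T\tau^{1+\delta}$ for $\sigma_0=\frac{d}{2}+\frac{\epsilon}{2}$ and some $\delta>0$, and then runs a Lady Windermere argument \emph{in $H^{\sigma_0}$} (where $\sigma_0>\frac{d}{2}$, so stability closes) to get $\|u^n-u(t_n)\|_{H^{\sigma_0}}\le C_T\tau^\delta$. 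This fractional convergence yields the uniform bound $\sup_{n\tau\le T}\|u^n\|_{H^{\sigma_0}}<\infty$ for small $\tau$, after which your $H^r$ Gronwall argument goes through.
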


We comment on the regularity assumptions made in the above theorem. First, in the (smooth) regime $r> \frac{d}{2}$, we ask only for two additional derivatives on the initial data $u_0$ and the potential $V$. A preceding second order convergence result has been established for the NLS equation by \cite{OWY} in this regime.
Indeed, using a resonance-based approach the authors \cite{OWY} showed second order convergence in $H^r$, for $r> \frac{d}{2}$, and $u_0\in H^{r+2}$, of a low-regularity scheme for NLS (given by equation \eqref{second-order} with $V=0$). Secondly,
to our knowledge this is the first convergence result in $H^r$ of a second order time discretization of equation \eqref{evGP} in the intermediate regime $0<r\le\frac{d}{2}$.
Finally, we compare once again our result to the $L^2$-convergence results obtained in the literature; we mention the authors \cite{LubichGP} who show second order convergence of a Strang splitting scheme for a $C^5$-smooth potential. Whereas, the authors \cite{Peterseim} obtain second order convergence of a Crank-Nicholson scheme for a smooth potential $V$ and -among other assumptions- for $u_{tt} \in L^2(0,T;H^2(\Omega))$.
As mentioned previously, we emphasize  that in contrast to previous results we establish convergence under low-regularity assumptions on {\it both} $u$ and $V$.

\subsection{Outline of the paper}
We motivate the construction of the first order low-regularity integrator in Section \ref{periodic} by first deriving the scheme in the periodic setting. We then generalize to the construction of the low-regularity scheme for an arbitrary domain $\Omega \subset \mathbb{R}^d$. In Section \ref{second} we introduce the second order low-regularity integrator and discuss stability issues which arise. We then propose two different approaches to guarantee the stability of our proposed scheme. Section \ref{first} and \ref{second} also include the local and global error analysis of the first and second low-regularity integrators. Finally, in Section \ref{sec:numexp} we present numerical experiments underlining our theoretical findings. In the next section we briefly introduce some notation and nonlinear estimates which are crucial for the local and global error analysis.
\section{Notation and nonlinear estimates}\label{notation}
We begin by establishing some notation used in the paper, starting by the definition of the norm used throughout the error analysis sections.
%
%
%
Our analysis will be made in the periodic fractional Sobolev space, 
$$H^{r}(\mathbb{T}^d) := \{u = \sum_{k \in \mathbb{Z}^d}u_k \frac{e^{ikx}}{\sqrt{ (2\pi)^d}} \in L^2(\mathbb{T}^d): |u|_r^2 \triangleq \sum_{k \in \mathbb{Z}^d} |k|^{2 r}|u_k|^2 < +\infty \}$$
which is endowed with the norm
\begin{align*}
||u||_{H^r}^2 &= ||u||_{L^2(\mathbb{T}^d)}^2 + ||(-\Delta)^{r/2}u||_{L^2(\mathbb{T}^d)}^2\\
&= \sum_{k \in \mathbb{Z}^d} (1+ |k|^{2r})|u_k|^2,
\end{align*}
where $u_k = \frac{1}{\sqrt{ (2\pi)^d}}\int_{\mathbb{T}^d} u e^{-ikx}dx$.

Throughout the remainder of this section we fix $\sigma > \frac{d}{2}$, and we restrict the class of initial data and potential to belong to the Sobolev space $H^{\sigma}$.
We now present some nonlinear estimates which  will be fundamental for our analysis.
 We separate our $H^{r}$-error analysis into three cases: $r = 0$, $0<r \le \frac{d}{2}$, and $r > \frac{d}{2}$.
First, when $r = 0$, using the Sobolev embedding $H^{\sigma} \hookrightarrow L^{\infty}$, we have the following nonlinear $L^2$ estimate:
\be\label{L^2-bilin}
||vw||_{L^2} \lesssim ||v||_{H^{\sigma}} ||w||_{L^2}.
\ee
%

In the case where $0<r\le \frac{d}{2}$ we have, 
\be\label{bilin-nonsmooth}
||vw||_{H^{r}} \lesssim ||v||_{H^{\sigma}} ||w||_{H^{r}},
\ee
while in the regime $r > \frac{d}{2}$ the above holds with $\sigma = r$ (see for example \cite[equation (2.49)]{HerrSchratz}). For completeness, we provide a proof of the above inequality \eqref{bilin-nonsmooth} in the Appendix for both regimes of $r$. These estimates will be used frequently throughout the error analysis sections (see Sections \ref{sec:LocalS1}, \ref{Gerr1_section}, \ref{sec:localO2}, \ref{Gerr2_section}).

One can easily deduce from the inequalities \eqref{L^2-bilin} and \eqref{bilin-nonsmooth} the following estimates on the nonlinearity \eqref{nonlin};
\be\label{A2.2}
\begin{split}
||f(w,\bar{w}, V)|| _{H^{r}} 
&\le C_{r,\sigma}(||w||_{H^{\sigma}}, ||V||_{H^{\sigma}}) ||w||_{H^{r}} \\
||f(w,\bar{w}, V) - f(v, \bar{v}, V)||_{H^{r}} 
&\le C_{r,\sigma}(||w||_{H^{\sigma}}, ||v||_{H^{\sigma}}, ||V||_{H^{\sigma}})||w-v ||_{H^{r}},
\end{split}
\ee
where $C_{r,\sigma}(||u||,||v||,||w||)$ denotes a generic constant which depends on the bounded arguments $||u||$, $||v||$, and $||w||$. In the regime $r > \frac{d}{2}$ the above holds with $\sigma = r$.
\begin{rem}
In order to deal with less smooth initial data $u_0 \in H^r, r \le \frac{d}{2}$ one cannot make use of the bilinear estimates \eqref{L^2-bilin}, \eqref{bilin-nonsmooth} and one would need to call upon more subtle tools to show appropriate fractional convergence of the scheme. Several works have been made when working on $\Omega = \mathbb{T}$ or $\Omega = \mathbb{R}^d$ where low-regularity estimates for very rough data $u_0 \in H^r, r \le \frac{d}{2}$ could be obtained by using tools from dispersive PDE such as discrete Strichartz estimates, or Bourgain spaces, see \cite{ORS1}, \cite{ORS2}. This refined error analysis is out of scope for this paper.
\end{rem}

We finish this subsection by stating the following local well-posedness result of a solution to \eqref{evGP} and \eqref{init} of the form \eqref{Duhamel}. Indeed, using the estimates \eqref{A2.2}, one obtains from a classical Banach fixed point argument the following result: 
%
\begin{thm}\label{LWP}
Let $\sigma_0 > \frac{d}{2}$. Given any $u_0\in H^{\sigma_0}(\mathbb{T}^d)$, and $V\in H^{\sigma_0}(\mathbb{T}^d)$ there exists $T >0$ and a unique solution $u\in C([0,T], H^{\sigma_0}(\mathbb{T}^d))$ to \eqref{evGP}. 
\end{thm}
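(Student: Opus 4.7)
The plan is to set up a standard Banach fixed point argument on Duhamel's formula, using the algebra-type estimates \eqref{A2.2} stated just above the theorem (with $r=\sigma=\sigma_0$, which is legitimate since $\sigma_0>d/2$). Define the Duhamel operator
\[
\Phi(u)(t) := e^{it\Delta}u_0 + \int_0^t e^{i(t-\zeta)\Delta} f(u,\bar u,V)(\zeta)\,d\zeta,
\]
and work on the complete metric space
\[
X_{T,R} := \{\, u \in C([0,T],H^{\sigma_0}(\mathbb{T}^d)) : \sup_{t\in[0,T]} \|u(t)\|_{H^{\sigma_0}} \le R \,\},
\]
equipped with the sup-in-time $H^{\sigma_0}$ norm. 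I would fix $R := 2\|u_0\|_{H^{\sigma_0}}$, and choose $T>0$ small depending on $R$ and $\|V\|_{H^{\sigma_0}}$.

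First I would verify that $\Phi$ maps $X_{T,R}$ into itself. Since $e^{it\Delta}$ is unitary on every $H^s(\mathbb{T}^d)$, the linear part contributes $\|e^{it\Delta}u_0\|_{H^{\sigma_0}} = \|u_0\|_{H^{\sigma_0}} \le R/2$. For the Duhamel term, moving the norm inside the integral and again using unitarity, then invoking the first line of \eqref{A2.2} with $r=\sigma=\sigma_0$, gives
\[
\Bigl\|\int_0^t e^{i(t-\zeta)\Delta} f(u,\bar u,V)(\zeta)\,d\zeta\Bigr\|_{H^{\sigma_0}}
\le T \cdot C_{\sigma_0,\sigma_0}(R,\|V\|_{H^{\sigma_0}})\, R.
\]
Choosing $T$ so that this last quantity is $\le R/2$ yields $\Phi: X_{T,R}\to X_{T,R}$. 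Continuity in time of $\Phi(u)$ follows from strong continuity of the Schrödinger group on $H^{\sigma_0}$ and dominated convergence applied to the Duhamel integrand.

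Next I would establish the contraction property. For $u,v\in X_{T,R}$, the same unitarity argument together with the Lipschitz estimate (second line of \eqref{A2.2}) gives
\[
\|\Phi(u)(t)-\Phi(v)(t)\|_{H^{\sigma_0}}
\le T\, C_{\sigma_0,\sigma_0}(R,R,\|V\|_{H^{\sigma_0}})\, \sup_{[0,T]} \|u-v\|_{H^{\sigma_0}},
\]
so a further reduction of $T$ makes $\Phi$ a strict contraction. Banach's fixed point theorem then supplies a unique fixed point $u \in X_{T,R}$, which is by construction a mild solution of \eqref{evGP}--\eqref{init} in $C([0,T],H^{\sigma_0}(\mathbb{T}^d))$. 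Uniqueness in the full space $C([0,T],H^{\sigma_0})$, not merely within the ball, follows by a standard Grönwall argument: if $u_1,u_2$ are two such solutions, subtracting the Duhamel formulas and applying the Lipschitz estimate on $[0,t]$ yields $\|u_1(t)-u_2(t)\|_{H^{\sigma_0}} \le C\int_0^t \|u_1-u_2\|_{H^{\sigma_0}}\,d\zeta$, whence $u_1\equiv u_2$.

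I do not expect any serious obstacle: the only nontrivial ingredient is the algebra property of $H^{\sigma_0}$ for $\sigma_0>d/2$, which is precisely what \eqref{A2.2} encodes, so the argument is purely contractive and quantitative. The only mild care needed is to verify time-continuity into $H^{\sigma_0}$ (rather than a weaker topology), which is routine given that the nonlinearity maps $C([0,T],H^{\sigma_0})$ continuously into itself by the same algebra estimate.
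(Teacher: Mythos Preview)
Your proposal is correct and matches the paper's approach exactly: the paper does not give a detailed proof but simply states that the result follows from the estimates \eqref{A2.2} via a classical Banach fixed point argument, which is precisely what you have carried out. Your write-up is a faithful fleshing-out of that one-line justification.
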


\section{First order scheme and analysis}\label{first}
%
In this section we start by giving the main ideas behind the construction of the first order low-regularity scheme (see \cite{RS}).
We propose a novel low-regularity integrator for the approximation of Duhamel's formula \eqref{Duhamel}.
%
%
We will approximate equation \eqref{Duhamel} at the time step $t_n + \tau$, where $\tau$ is the time step size. By iterating Duhamel's formula \eqref{Duhamel}, we obtain the first order iteration 
\be \label{iter}
u(t_n + \tau) = e^{i\tau \Delta}[ u(t_n) - i \mathcal{J}_1(\tau, \Delta, u(t_n))] + R_{1,0}(\tau,u)
\ee
where the principal oscillatory integral (at first order) is given by
\be \label{PO}
\mathcal{J}_1(\tau, \Delta, v)=
\int_0^\tau e^{-i\zeta \Delta} [V(x)(e^{i\zeta \Delta}v) + (e^{i\zeta \Delta}v)^2(e^{-i\zeta \Delta}\bar{v})] d\zeta
\ee
and the remainder
\begin{equation*}
R_{1,0}(\tau,u) = \int_0^\tau e^{i(\tau - \zeta)\Delta }[f(u(t_n + \zeta), \bar{u}(t_n + \zeta), V) -f(e^{i\zeta \Delta}u(t_n), e^{-i\zeta \Delta}\bar{u}(t_n), V)] d\zeta.
\end{equation*}
We will construct a suitable discretization of the integral \eqref{PO} to allow for a low-regularity approximation to the first order Duhamel iterate \eqref{iter}. The idea is to filter out the dominant parts, which we denote by $\mathcal{L}_{dom}$, of the nonlinear frequency interactions within the integral \eqref{PO} and embed them in the discretization. The lower-order parts will be approximated and incorporated in the local error analysis.

First, to illustrate the underlying idea and to provide intuition behind the construction of these low-regularity integrators 
we start by analyzing the case of periodic boundary conditions $\Omega = \mathbb{T}$, with $V$ a periodic potential. The ideas presented in the next section were first introduced by the authors \cite{OS1} for solving a class of semilinear Schr\"odinger equations. After presenting the periodic case in a formal way, we rigorously detail in Section \ref{general1} the construction of the first order scheme in the more general case of an arbitrary domain $\Omega \subset \mathbb{R}^d$.
\subsection{Case of periodic boundary conditions: $\Omega = \mathbb{T}$}\label{periodic}
Assuming that $v \in L^2$, we can expand $v$ in Fourier series $v = \sum_{k \in \mathbb{Z}} \hat{v}_k e^{ikx}$. This allows us to express the action of the Schr\"odinger flow on v, $e^{\pm it \Delta } v(x) = \sum_{k \in \mathbb{Z}} \hat{v}_k e^{\mp it k^2 }e^{ikx}$.
Similarly assuming $V \in L^2$ we have $V(x) = \sum_{l \in \mathbb{Z}} \hat{V}_l e^{ilx}$.
In Fourier space, the oscillatory integral \eqref{PO} is then given by,
\begin{equation}\label{PO2}
\mathcal{J}_1(\tau, \Delta, v)= \sum_{l_1+ l_2 = l } \hat{V}_{l_1} \hat{v}_{l_2}e^{ilx} \int_0^\tau e^ {i\zeta R_2(l)} d\zeta + \sum_{-k_1+k_2+ k_3 =k } \bar{\hat{v}}_{k_1} \hat{v}_{k_2} \hat{v}_{k_3} e^{ikx} \int_0^\tau e^{i \zeta R_1(k) }d\zeta 
\end{equation}
with the resonance structure,
\be\label{res}
R_1(k) = 2k_1^2 -2k_1(k_2+k_3) + 2k_2k_3, \ \text{and} \ \ R_2 (l) = l_{1}^2 + 2l_1l_2.
\ee
Ideally we would like to integrate all the nonlinear frequency interactions \eqref{res} exactly and embed them in the discretization. This, however, would result in a generalized convolution (of Coifman-Meyer type \cite{CM}), which cannot be rewritten in physical space. Hence, the computations would need to be fully made in Fourier space. Carrying this out in higher spatial dimensions $d$ would cause large memory and computational efforts of order $O(K^{d \cdot \ell})$, where $K$ denotes the highest frequency in the discretization and $\ell$ is the number of factors in the nonlinearity. For practical computations, we want to be able to express the discretization also in physical space in order to use the Fast Fourier Transform (FFT) which is of computational effort of order $O(|K|^d log|K|^d)$. Therefore, we choose in the following an approximation of the integral \eqref{PO2} which allows for a practical implementation (by not performing exact integration), while optimizing the local error in the sense of regularity. We detail this procedure below.

%
We can extract the dominant and lower-order parts from the resonance structures \eqref{res} by recalling that $2k_1^2$ and $l_{1}^2$ correspond to second order derivatives in Fourier while the terms $k_m k_j $ (for $m \not= j$) correspond to product of first order derivatives.
We choose, 
\begin{equation*}
R_1(k) = \mathcal{L}_{dom,1}(k_1) + \mathcal{L}_{low,1}(k_1, k_2, k_3), \ \ R_2(l) = \mathcal{L}_{dom,2}(l_1) + \mathcal{L}_{low,2}(l_1, l_2)
\end{equation*}
with
\begin{align*}
&\mathcal{L}_{dom,1}(k_1) = 2k_1^2, \ \mathcal{L}_{low,1}(k_1, k_2, k_3) =  -2k_1(k_2+k_3) + 2k_2k_3, \ \text{and} \\ \\
&\mathcal{L}_{dom,2}(l_1) = l_{1}^2, \ \ \mathcal{L}_{low,2}(l_1, l_2) = 2l_1 l_2.
\end{align*}
From the above and from equation \eqref{PO2}, by a simple Taylor's expansion on the lower-order parts we (formally) allow for the following approximation of the oscillatory integral in Fourier space,
\begin{align*}
\mathcal{J}_1(\tau, \Delta, v) &= \sum_{l_1+ l_2 = l } \hat{V}_{l_1} \hat{v}_{l_2}e^{ilx} \int_0^\tau e^{i\zeta \mathcal{L}_{dom,2}(l_1)} e^{i\zeta \mathcal{L}_{low,2}(l_1, l_2)} d\zeta \\
&\qquad + \sum_{-k_1+k_2+ k_3 =k } \bar{\hat{v}}_{k_1} \hat{v}_{k_2} \hat{v}_{k_3} e^{ikx} \int_0^\tau e^{i \zeta \mathcal{L}_{dom,1}(k_1)} e^{i \zeta\mathcal{L}_{low,1}(k_1, k_2, k_3) }d\zeta \\
& = \sum_{l_1+ l_2 = l } \hat{V}_{l_1} \hat{v}_{l_2}e^{ilx} \int_0^\tau e^{i\zeta \mathcal{L}_{dom,2}(l_1)} \big(1+ O(\zeta \mathcal{L}_{low,2}(l_1, l_2))\big) d\zeta \\
&\qquad + \sum_{-k_1+k_2+ k_3 =k } \bar{\hat{v}}_{k_1} \hat{v}_{k_2} \hat{v}_{k_3} e^{ikx} \int_0^\tau e^{i \zeta \mathcal{L}_{dom,1}(k_1)} \big(1+ O(\zeta\mathcal{L}_{low,1}(k_1, k_2, k_3))\big) d\zeta. \\
\end{align*}
Mapping back into physical space we thus have 
\begin{align*}
&\mathcal{L}_{dom,1}(v) = -2\Delta v, \ \  \mathcal{L}_{low,1}(v) = 2(2{ |\nabla v|}^2v - |\nabla v|^2\bar{v}),\\
&\mathcal{L}_{dom,2}(v) = -\Delta v,  \ \ \mathcal{L}_{low,2}(v, V) = -2\nabla V \nabla v,
\end{align*}
and
\begin{align} \label{discr_OI}
\mathcal{J}_1(\tau, \Delta, v)
&= \int_0^\tau [e^{i\zeta \mathcal{L}_{dom,2}} V]v + [e^{i\zeta \mathcal{L}_{dom,1}} \bar{v}]v^2 + O\left(\zeta (\mathcal{L}_{low,2}(v, V) + \mathcal{L}_{low,1}(v))\right)d\zeta \\ \nonumber
&= \tau[v \varphi_1(i \tau \mathcal{L}_{dom,2}) V + v^2 \varphi_1(i \tau \mathcal{L}_{dom,1})\bar{v} ] + O\left(\tau^2 (\mathcal{L}_{low,2}(v,V) + \mathcal{L}_{low,1}(v))\right).
\end{align}
Hence, for a small time step $\tau$, by plugging the above expression of $\mathcal{J}_1$ in the iterate \eqref{iter} and ignoring the lower-order terms yields the first-order resonance based discretization
\be\label{scheme_periodic}
u^{n+1} = e^{i\tau \Delta}[ u^n - i \tau(u^n \varphi_1(-i \tau \Delta ) V + (u^n)^2 \varphi_1(-2i \tau\Delta)\bar{u}^n )].
\ee
The above scheme \eqref{scheme_periodic} has a favorable local error structure; namely from equation \eqref{discr_OI} we see that (formally) this discretization only ask for {\it first order} derivatives on the initial data and potential.

We now place ourselves in the general framework $\Omega \subset \mathbb{R}^d$, and make use of filtering techniques to recover the first order low-regularity approximation \eqref{scheme_periodic} in this general setting. The ideas presented in the next section are inspired by the work of \cite{RS}.
%
%
\subsection{General boundary conditions: $\Omega \subset \mathbb{R}^d$}\label{general1}
%
The goal of this section is to construct a first order discretization of the oscillatory integral \eqref{PO} when working on a general domain $\Omega$, and which allows for the improved local error structure \eqref{discr_OI} established in the preceding section. This is achieved by introducing a properly chosen filtered function which will filter out the dominant oscillatory terms $\mathcal{L}_{dom,1}, \mathcal{L}_{dom,2}$ explicitly found in the preceding section.

First, we recall the definition of the commutator-type term $\mathcal{C}[H,L]$ for $H(v_1, \cdots, v_n), \ n \ge 1$, a function and $L$ a linear operator:
$$
\mathcal{C}[H,L](v_1, \cdots, v_n) = -L(H(v_1, \cdots, v_n)) + \sum_{i=1}^{n}D_i H(v_1, \cdots, v_n) \cdot Lv_i.
$$
We make an important note that the above differs from the well known Lie commutator used for the error analysis of classical methods, such as for splitting methods (see for example \cite{LW}, \cite{LubichGP}).

We define the filtered function by
\be\label{filterfcn}
\begin{split}
\mathcal{N}(\tau, s, \zeta, \Delta, v) 
&= e^{-is\Delta} [e^{is\Delta}e^{-i\zeta \Delta}V(e^{is\Delta}v) + (e^{is\Delta} v)^2(e^{is\Delta}e^{-2i\zeta \Delta} \bar{v})].
\end{split}
\ee
The principal oscillations \eqref{PO} can be expressed with the aid of the filter function $\mathcal{N}$ as
\begin{equation*}
\mathcal{J}_1(\tau, \Delta, v)= \int_0^\tau \mathcal{N}(\tau, \zeta, \zeta, \Delta, v) d\zeta.
\end{equation*}
By the fundamental theorem of calculus we have
\be\label{Taylor1st}
\begin{split} 
\mathcal{J}_1(\tau, \Delta, v)
&= \int_0^\tau \mathcal{N}(\tau, 0, \zeta,v)d\zeta + \int_0^\tau \int_0^\zeta \partial_s \mathcal{N}(\tau,s,\zeta,v)ds d\zeta \\
\end{split}
\ee
where 
\be\label{N0}
\mathcal{N}(\tau, 0, \zeta,v) =
[e^{i\zeta \mathcal{L}_{dom,2}} V]v + [e^{i\zeta \mathcal{L}_{dom,1}} \bar{v}]v^2
\ee
and
\begin{equation*}
\partial_s \mathcal{N}(\tau ,s,\zeta,v) = e^{-is\Delta} \mathcal{C}[f,i\Delta](e^{is\Delta}v, e^{is\Delta}e^{-2i\zeta\Delta}\bar{v},  e^{is\Delta}e^{-i\zeta\Delta}V), 
\end{equation*}
\be\label{com}
\mathcal{C}[f,i\Delta](u,v,w) = -2i(\nabla w \cdot \nabla u + |\nabla u|^2 v + \nabla (u^2) \cdot \nabla v).
\ee
Hence, we recover the discretization of the oscillatory integral \eqref{PO} together with an improved local error structure of the form \eqref{discr_OI};
$$
\mathcal{J}_1(\tau, \Delta, v) = \tau[v \varphi_1(i \tau \mathcal{L}_{dom,2}) V + v^2 \varphi_1(i \tau \mathcal{L}_{dom,1})\bar{v} ] + R_{1,1}(\tau)
$$
where 
$$
R_{1,1}(\tau) = \int_0^\tau \int_0^\zeta e^{-is\Delta} \mathcal{C}[f,i\Delta](e^{is\Delta}v, e^{is\Delta}e^{-2i\zeta\Delta}\bar{v},  e^{is\Delta}e^{-i\zeta\Delta}V) ds d\zeta.
$$
\begin{cor} \label{first-order-cor}
The exact solution $u$ of \eqref{evGP} can be expanded as
$$
u(t_n + \tau) = e^{i\tau \Delta}[ u(t_n) - i \tau(u(t_n) \varphi_1(-i \tau \Delta ) V + (u(t_n))^2 \varphi_1(-2i \tau\Delta)\bar{u}(t_n) )] + \mathcal{R}_1(\tau, t_n)
$$
where the remainder is given by
\begin{equation}\label{R1}
\begin{split}
\mathcal{R}_1(\tau, t_n)
&= \int_0^\tau e^{i(\tau - \zeta)\Delta }[f(u(t_n + \zeta), \bar{u}(t_n + \zeta), V) -f(e^{i\zeta \Delta}u(t_n), e^{-i\zeta \Delta}\bar{u}(t_n), V)] d\zeta \\
& + \int_0^\tau \int_0^\zeta e^{i(\tau - s)\Delta} \mathcal{C}[f,i\Delta](e^{is\Delta}u(t_n), e^{is\Delta}e^{-2i\zeta\Delta}\bar{u}(t_n),  e^{is\Delta}e^{-i\zeta\Delta}V) ds d\zeta.
\end{split}
\end{equation}
\end{cor}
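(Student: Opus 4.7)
The plan is to assemble two pieces that are essentially already in place in the preceding text: (i) the once-iterated Duhamel identity \eqref{iter}, whose remainder $R_{1,0}(\tau,u)$ records the error of replacing $u(t_n+\zeta)$ by $e^{i\zeta\Delta}u(t_n)$ inside the nonlinearity, and (ii) the filtering-type expansion \eqref{Taylor1st}, which splits the oscillatory integral $\mathcal{J}_1(\tau,\Delta,u(t_n))$ into a dominant part carrying the resonant frequency interactions and a commutator remainder $R_{1,1}(\tau)$. Combining them and tracking the $e^{i\tau\Delta}$ prefactor produces the claimed exact identity.

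The first step is to write Duhamel's formula \eqref{Duhamel} at time $t_n+\tau$, substitute $u(t_n+\zeta)\mapsto e^{i\zeta\Delta}u(t_n)$ in the argument of the nonlinearity $f$, and collect the substitution error as $R_{1,0}(\tau,u)$. Factoring $e^{i\tau\Delta}$ out of the remaining integral and comparing with the definition \eqref{PO} rewrites it as $-i\,e^{i\tau\Delta}\mathcal{J}_1(\tau,\Delta,u(t_n))$, which is exactly \eqref{iter}.

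The second step is to apply the fundamental theorem of calculus in the auxiliary variable $s$ to the filtered integrand $\mathcal{N}(\tau,s,\zeta,\Delta,u(t_n))$ between $s=0$ and $s=\zeta$, starting from the identity $\mathcal{J}_1(\tau,\Delta,v)=\int_0^\tau \mathcal{N}(\tau,\zeta,\zeta,\Delta,v)\,d\zeta$. This produces the decomposition \eqref{Taylor1st}. A direct chain-rule computation using $\partial_s e^{\pm is\Delta}=\pm i\Delta\,e^{\pm is\Delta}$ together with the definition of the commutator-type quantity $\mathcal{C}[f,i\Delta]$ identifies $\partial_s\mathcal{N}$ with the expression appearing in the inner integrand of $R_{1,1}(\tau)$. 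At $s=0$ the filter reduces to \eqref{N0}, and the elementary identity $\int_0^\tau e^{i\zeta L}\,d\zeta=\tau\varphi_1(i\tau L)$ applied with $L=\mathcal{L}_{dom,2}=-\Delta$ for the potential contribution and $L=\mathcal{L}_{dom,1}=-2\Delta$ for the cubic contribution converts $\int_0^\tau \mathcal{N}(\tau,0,\zeta,u(t_n))\,d\zeta$ into exactly $\tau\bigl(u(t_n)\varphi_1(-i\tau\Delta)V+(u(t_n))^2\varphi_1(-2i\tau\Delta)\bar{u}(t_n)\bigr)$, giving the principal part of the scheme.

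The final step is to substitute this decomposition of $\mathcal{J}_1$ back into \eqref{iter}: the principal part, multiplied by $-i$ and acted on by $e^{i\tau\Delta}$, produces the scheme $\Phi^\tau_{num,1}$ applied to $u(t_n)$, while the contribution $-i\,e^{i\tau\Delta}R_{1,1}(\tau)$ combines with $R_{1,0}(\tau,u)$ to yield $\mathcal{R}_1(\tau,t_n)$ as written in \eqref{R1}. There is no analytic obstacle here since the corollary is an exact identity rather than an estimate; the only delicate piece is to track the factors of $i$ and the signs through the chain rule computation of $\partial_s\mathcal{N}$ and the commutator $\mathcal{C}[f,i\Delta]$, which is where algebraic slips are easiest to make.
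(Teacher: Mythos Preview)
Your proposal is correct and follows exactly the route laid out in the paper: it combines the first-order Duhamel iterate \eqref{iter} (giving $R_{1,0}$) with the fundamental-theorem-of-calculus expansion \eqref{Taylor1st} of the filtered function $\mathcal{N}$ (giving the $\varphi_1$ principal part and the commutator remainder $R_{1,1}$), and then reads off $\mathcal{R}_1=R_{1,0}-i\,e^{i\tau\Delta}R_{1,1}$. There is nothing to add; the corollary is indeed an exact identity assembled from those two preceding displays.
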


The first order low-regularity scheme \eqref{first-order} follows from the above Corollary \ref{first-order-cor} by neglecting the remainder $\mathcal{R}_1(\tau, t_n)$. We next show the first-order error estimates for the scheme \eqref{first-order} by first estimating it's favorable commutator-type local error structure.
\subsection{Local error estimates}\label{sec:LocalS1}
%
\begin{prop}\label{LocalS1}
Let $T > 0$, $r \ge 0$, and $r_1$ as in Theorem \ref{Global1}, namely
\begin{equation*}
r_1 = 
\begin{cases}
r + 1, \ \text{if} \ r > \frac{d}{2},\\
1+ \frac{d}{2} + \epsilon, \ \text{if} \ 0 < r \le \frac{d}{2},\\
1 + \frac{d}{4},  \ \text{if} \ r = 0,\\ 
\end{cases}
\end{equation*}
where $0 < \epsilon < \frac{1}{4}$ can be arbitrarily small.
%
%
%
Assume there exists $C_T>0$ such that
\be\label{reg1u}
\sup_{[0,T]}||u(t)||_{H^{r_1}} \le C_T, \ \text{and} \ \ ||V||_{H^{r_1}} \le C_T,
\ee
then there exists $M_T >0$ 
such that for every $\tau \in (0,1]$, 
\be\label{local-r_0}
||\mathcal{R}_1(\tau, t_n)||_{H^{r}} \le M_T \tau^2, \qquad 0 \le t_n \le T,
\ee
where $t_n = n\tau$ and $\mathcal{R}_1(\tau, t_n)$ is given in equation \eqref{R1}.
\end{prop}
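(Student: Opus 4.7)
The remainder $\mathcal{R}_1(\tau, t_n)$ defined in \eqref{R1} splits as the sum of two integrals, which I will call $I$ and $II$. The plan is to show that each is $O(\tau^2)$ in $H^r$-norm, using the unitarity of $e^{it\Delta}$ on every Sobolev space to discard the free-flow factors when taking norms, and then combining Duhamel's formula, the nonlinear estimates \eqref{A2.2}, and the bilinear estimates recalled in Section \ref{notation}.

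For the first piece $I$, the integrand is the difference of $f$ evaluated at $u(t_n+\zeta)$ versus at its linear approximation $e^{i\zeta\Delta}u(t_n)$. The Lipschitz-type estimate in the second line of \eqref{A2.2} at level $r$ reduces matters to controlling $\|u(t_n+\zeta) - e^{i\zeta\Delta}u(t_n)\|_{H^r}$. Writing this difference via Duhamel's formula \eqref{Duhamel} as $\int_0^\zeta e^{i(\zeta-s)\Delta}f(u(t_n+s), \bar u(t_n+s), V)\,ds$ and invoking the first line of \eqref{A2.2} yields a bound of order $\zeta$, provided $\sup_{[0,T]}\|u\|_{H^\sigma}$ and $\|V\|_{H^\sigma}$ are finite for some $\sigma > d/2$. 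Since $r_1 > d/2$ in each of the three regimes of \eqref{r_1}, the hypothesis \eqref{reg1u} supplies this. Integration over $\zeta \in [0, \tau]$ then gives $\|I\|_{H^r} \lesssim \tau^2$.

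For the second piece $II$, the key is to bound $\|\mathcal{C}[f, i\Delta](\cdot,\cdot,\cdot)\|_{H^r}$ uniformly in $s$ and $\zeta$, because the double integration over $\{0\le s\le\zeta\le\tau\}$ then contributes the extra $\tau^2$ factor. From \eqref{com}, every summand is of the form $\nabla V\cdot \nabla u$, $|\nabla u|^2 v$, or $\nabla(u^2)\cdot\nabla v$, so exactly one derivative is lost on each factor; this accounts for the extra derivative appearing in the definition of $r_1$. For $r > d/2$, the algebra property of $H^r$ concludes directly in terms of $\|u\|_{H^{r+1}}$ and $\|V\|_{H^{r+1}}$. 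For $0 < r \le d/2$, I would invoke the bilinear estimate \eqref{bilin-nonsmooth} with $\sigma = d/2+\epsilon$, placing one factor in $H^\sigma$ and the other in $H^r$; after restoring the lost gradient this forces $r_1 = 1 + d/2 + \epsilon$. For $r = 0$, the bilinear estimate \eqref{L^2-bilin} is not enough for the cubic term, so I would instead combine H\"older's inequality with the Sobolev embeddings $H^{d/4}(\mathbb{T}^d)\hookrightarrow L^4(\mathbb{T}^d)$ and $H^{1+d/4}(\mathbb{T}^d)\hookrightarrow L^\infty(\mathbb{T}^d)$, both valid for $d\le 3$, to write for instance $\||\nabla u|^2 v\|_{L^2} \lesssim \|\nabla u\|_{L^4}^2\|v\|_{L^\infty} \lesssim \|u\|_{H^{1+d/4}}^2 \|v\|_{H^{1+d/4}}$, and similarly $\|\nabla V \cdot \nabla u\|_{L^2} \lesssim \|V\|_{H^{1+d/4}}\|u\|_{H^{1+d/4}}$.

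The main obstacle I anticipate is precisely the triple-product cubic commutator in the borderline regime $r=0$: two derivatives must be absorbed simultaneously while staying in $L^2$, and one has to verify that the tail $r_1 = 1+d/4$ is compatible with the chosen H\"older exponents and with the dimension restriction $d \le 3$. Fortunately the embedding $H^{1+d/4}\hookrightarrow L^\infty$ holds precisely when $d < 4$, so the numerology works out. Everything else is a routine assembly of the three cases using the ingredients above, with all constants controlled by the $C_T$ of \eqref{reg1u} through the composite bounds in \eqref{A2.2}.
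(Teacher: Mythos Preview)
Your proposal is correct and follows essentially the same route as the paper: split $\mathcal{R}_1$ into the Duhamel-difference piece and the commutator piece, bound the first via the Lipschitz estimate \eqref{A2.2} plus Duhamel, and bound the second by estimating $\mathcal{C}[f,i\Delta]$ case-by-case in $r$ using the algebra property, the bilinear estimate \eqref{bilin-nonsmooth}, and (for $r=0$) the $H^{d/4}\hookrightarrow L^4$ embedding together with an $L^\infty$ embedding. The only cosmetic difference is that the paper introduces an auxiliary index $\sigma_0=\tfrac{d}{2}+\tfrac{\epsilon}{2}$ and uses $H^{\sigma_0}\hookrightarrow L^\infty$ (noting $\sigma_0<1+\tfrac{d}{4}$), whereas you invoke $H^{1+d/4}\hookrightarrow L^\infty$ directly; both are valid for $d\le 3$.
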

\begin{proof} 
We write the error term $\mathcal{R}_1(\tau, t_n)$, defined in equation \eqref{R1}, as the sum of two terms, $\mathcal{R}_1(\tau, t_n) = \mathcal{G}^1(\tau,t_n) + \mathcal{G}^2(\tau,t_n)$. We begin by estimating the first term $\mathcal{G}^1(\tau,t_n)$.
Using the inequalities \eqref{A2.2} on $f$, and the boundedness of $e^{it\Delta}$ on Sobolev spaces we have that for all $\sigma > \frac{d}{2}$,
\be\label{I_o1}
\begin{split}
|| \mathcal{G}^1(\tau,t_n) ||_{H^{r}} 
&\le \tau \sup_{\zeta \in [0, \tau]}||f(u(t_n + \zeta), \bar{u}(t_n + \zeta), V) -f(e^{i\zeta \Delta}u(t_n), e^{-i\zeta \Delta}\bar{u}(t_n), V)||_{H^{r}}\\
& \le \tau C_{r,\sigma}(\sup_{[0,T]} ||u(t)||_{H^\sigma}, ||V||_{H^{\sigma}}) \sup_{\zeta \in [0, \tau]}||u(t_n + \zeta) - e^{i\zeta \Delta}u(t_n)||_{H^r} \\
&\le \tau C_{r,\sigma}(\sup_{[0,T]} ||u(t)||_{H^\sigma}, ||V||_{H^{\sigma}}) \sup_{\zeta \in[0, \tau]}||\int_0^\zeta e^{i(\zeta-s)\Delta}f(u(t_n+s), \bar{u}(t_n + s), V)ds||_{H^{r}}\\
&\le \tau^2C_{r,\sigma}(\sup_{[0,T]} ||u(t)||_{H^\sigma}, ||V||_{H^{\sigma}})\sup_{s\in [0, \tau]}||f(u(t_n+s), \bar{u}(t_n + s), V) ||_{H^{r}}\\
&\le C_{r,\sigma}(\sup_{[0,T]} ||u(t)||_{H^\sigma}, \sup_{[0,T]} ||u(t)||_{H^{r}}, ||V||_{H^{\sigma}})\tau^2,
\end{split}
\ee
where we use Duhamel's formula to go from the third to the fourth line.

We first note that by definition \eqref{r_1} of $r_1$ we clearly have that $r_1 > r$, and hence, 
$$
\sup_{[0,T]} ||u(t)||_{H^{r}} \le C_T.
$$
In the regime $r > \frac{d}{2}$, we take $\sigma = r$ in equation \eqref{I_o1}, which by the above remark clearly yields the desired bound $|| \mathcal{G}^1(\tau,t_n) ||_{H^{r}} \le C_T\tau^2$.

When $r \le \frac{d}{2}$, we will construct an appropriate $\sigma > \frac{d}{2}$ which will be used throughout the remainder of the proof when making the analysis in this non-smooth regime.\\
Let $0< \epsilon < \frac{1}{4}$, and let
\begin{equation}\label{sigma_0}
\sigma_0 = \frac{d}{2} + \frac{\epsilon}{2}.
\end{equation}
For $d\le 3$, we have that $\frac{d}{2} + \epsilon < 1+ \frac{d}{4}$, and hence $\sigma_0$ satisfies 
\begin{equation}\label{sig_0_b}
\frac{d}{2} < \sigma_0 < \frac{d}{2} + \epsilon < 1+ \frac{d}{4}.
\end{equation}
Consequently, by recalling the definition \eqref{r_1} of $r_1$, we have that $r_1 > \sigma_0$ when $r \le \frac{d}{2}$. Moreover, in the regime $0< r\le \frac{d}{2}$ we have the better bound $r_1 > \sigma_0 + 1$.\\
Hence, in the regime $r \le \frac{d}{2}$, since $r_1 > \sigma_0$ we obtain the desired bound $|| \mathcal{G}^1(\tau,t_n) ||_{H^{r}} \le C_T\tau^2$ by taking $\sigma = \sigma_0$ in equation \eqref{I_o1}.

We now estimate the second term $\mathcal{G}^2(\tau,t_n)$ in the remainder \eqref{R1}. From the explicit expression of the commutator \eqref{com}  and by making use of the nonlinear estimate \eqref{bilin-nonsmooth} we have for all $\sigma > \frac{d}{2}$,
\begin{align}\label{com-estimate1}
||\mathcal{C}[f,i\Delta](u,v,w)||_{H^{r}} &\le C_{r}( ||\nabla w \cdot \nabla u ||_{H^{r}} + |||\nabla u|^2 v||_{H^{r}} + 2||u \nabla u\cdot \nabla v||_{H^{r}}) \\ \nonumber
&\le C_{r}(||\nabla w||_{H^{\sigma}} ||\nabla u||_{H^{r}} + ||v||_{H^{\sigma}} ||\nabla u||_{H^{\sigma}}||\nabla u||_{H^{r}} + 2 ||u||_{H^{\sigma}}||\nabla u||_{H^{\sigma}} ||\nabla v||_{H^{r}})\\ \nonumber
&\le C_{r}(||u||_{r+1}, ||v||_{r+1}, ||u||_{\sigma+1}, ||v||_{\sigma+1}, ||w||_{\sigma+1}).
\end{align}

In the regime $r > \frac{d}{2}$, we have $r_1 = r + 1$ and by taking $\sigma = r$ in the above expression it follows that,
\begin{equation*}
\begin{split}
||\mathcal{C}[f,i\Delta](u,v,w)||_{H^{r}} &\le C_{r}(||u||_{r+1},||v||_{r+1},||w||_{r+1})\\
& \le C_{r}(||u||_{r_1},||v||_{r_1},||w||_{r_1}).
\end{split}
\end{equation*}

When $0<r \le \frac{d}{2}$, we take $\sigma=\sigma_0$ in the expression \eqref{com-estimate1}, where $\sigma_0$ is defined at \eqref{sigma_0}. Using the fact that $\sigma_0>\frac{d}{2} \ge r$ and $r_1 > \sigma_0 +1$ when $0<r \le \frac{d}{2}$, we obtain
\begin{equation*}
\begin{split}
||\mathcal{C}[f,i\Delta](u,v,w)||_{H^{r}} 
&\le C_{r}(||u||_{r+1}, ||v||_{r+1}, ||u||_{\sigma_0+1},||v||_{\sigma_0+1},||w||_{\sigma_0+1})\\
& \le C_{r}(||u||_{\sigma_0+1}, ||v||_{\sigma_0+1} ,||w||_{\sigma_0+1})\\
& \le C_{r}(||u||_{r_1},||v||_{r_1},||w||_{r_1}).
\end{split}
\end{equation*}
Finally we are left to treat the case $r = 0$, namely the error analysis in the $L^2$-norm. We wish to obtain more favorable regularity assumptions on $u(t)$ and $V$ (better than $\frac{d}{2} + 1 + \epsilon$) which are sufficient to bound the commutator term $\mathcal{G}^2$ in the $L^2$-norm. Instead of using the nonlinear estimate \eqref{bilin-nonsmooth} used to derive equation \eqref{com-estimate1}, we will make use of the Sobolev embedding $H^{\frac{d}{4}} \hookrightarrow L^4$ together with the embedding $H^{\sigma_0} \hookrightarrow L^{\infty}$, where $\sigma_0$ is defined in equation \eqref{sigma_0}. By recalling from equation \eqref{sig_0_b} that $\sigma_0 < 1+\frac{d}{4}$, we obtain
\begin{align*}
||\mathcal{C}[f,i\Delta](u,v,w)||_{L^2} &\le C( ||\nabla w \cdot \nabla u ||_{L^2} + |||\nabla u|^2 v||_{L^2} + 2||u \nabla u\cdot \nabla v||_{L^2}) \\
&\le C(||\nabla w||_{L^4} ||\nabla u||_{L^4} + ||v||_{H^{\sigma_0}} ||\nabla u||_{L^4}||\nabla u||_{L^4} + 2 ||u||_{H^{\sigma_0}}||\nabla u||_{L^4} ||\nabla v||_{L^4})\\
&\le C(||u||_{H^{1+\frac{d}{4}}},||v||_{H^{1+\frac{d}{4}}},||w||_{H^{1+\frac{d}{4}}}).
\end{align*}
Hence, by definition \eqref{r_1} of $r_1$, given any $r \ge 0$ we have shown the following bound,
\begin{equation}\label{com_bound}
||\mathcal{C}[f,i\Delta](u,v,w)||_{H^{r}} \le C_{r}(||u||_{r_1},||v||_{r_1},||w||_{r_1}).
\end{equation}
%
%
%
%
 %
Further, since $e^{is\Delta}$ is an isometry on Sobolev spaces we obtain the following estimate of $\mathcal{G}^2$ in $H^{r}$ norm,
$$
||\mathcal{G}^2(\tau,t_n)||_{H^{r}} \le C_{r}(\sup_{[0,T]}||u(t)||_{H^{r_1}},||V||_{H^{r_1}})\tau^2
$$
where $r_1$ is defined in equation \eqref{r_1}.
The local error estimate is hence demonstrated.
\end{proof}
We finish this section by making two remarks on the above proof.
\begin{rem}
We use the Sobolev embedding $H^{\frac{d}{4}} \hookrightarrow L^4$ for the analysis in the $L^2$-norm, as it leads to the more optimal regularity assumption $H^{1+ \frac{d}{4}}$ on the data and potential. Using this approach for an analysis in a higher Sobolev norm $H^{r}$ with $0< r \le \frac{d}{2}$, does not necessarily yield a better regularity assumption than $1+ \frac{d}{2} + \epsilon$. For example, in dimensions $d=2$ or $d=3$, an analysis made in the $H^1$-norm and  using the above Sobolev embedding would ask for $H^{2+ \frac{d}{4}}$ regularity, however we have the strict inequality $2+ \frac{d}{4} > 1+ \frac{d}{2} +\epsilon$. 
\end{rem}
\begin{rem}
By following the proof of Proposition \ref{LocalS1} one can ask for less Sobolev regularity on the potential $V$, while asking for more regularity on the solution $u(t)$. Indeed, for example, by making the analysis in the $L^2$-norm, one can ask for $V\in H^1$, and $u(t) \in 1+\frac{d}{2} + \epsilon$.
\end{rem}
\subsection{Global error estimates}\label{Gerr1_section}
Using the local error estimates established in the preceding section we show global first order convergence of our scheme \eqref{first-order} under the favorable regularity assumptions on the initial condition and the potential established previously.
\begin{proof}[Proof of Theorem \ref{Global1}]
Let $e^n= u^n - u(t_n)$, where $u^n= \Phi_{num,1}^{\tau}(u^{n-1})$ is given in equation \eqref{first-order}.
We begin by decomposing the error term as follows,
\be\label{err-decomp}
||e^{n+1}||_{H^{r}} = ||\Phi_{num,1}^\tau(u(t_n))-u(t_{n+1})||_{H^{r}} + ||\Phi_{num,1}^\tau(u^n) -\Phi_{num,1}^\tau(u(t_n)) ||_{H^{r}} . 
\ee
The first term of the above expression is given by the local error $\mathcal{R}_1(\tau, t_n)$ defined at equation \eqref{R1}, and which is of order $\tau^2$ by Proposition \ref{LocalS1}.
We wish to establish a stability estimate of the numerical flow $\Phi_{num,1}^\tau$ to bound the second term in equation \eqref{err-decomp}, and to conclude by a {\it Lady Windermere's fan} argument (\cite{LW}).
%

By using the estimates \eqref{L^2-bilin} and \eqref{bilin-nonsmooth}, together with the fact that $e^{i\xi\Delta}$ and $\varphi_1(i\xi\Delta)$ are bounded on Sobolev spaces (for all $\xi \in \mathbb{R}$), it easily follows from the definition of our scheme \eqref{first-order} that for all $r \ge 0$ and $\sigma > \frac{d}{2}$,
\be\label{stab-estim1}
||\Phi^\tau(u(t_n)) - \Phi^{\tau}(u^n) ||_{H^{r}} \le e^{L_n \tau}||e^n||_{H^{r}},
\ee
where
\be\label{L_n}
L_n:= C(||u^n||_{H^{\sigma}}, ||u(t_n)||_{H^{\sigma}}, ||V||_{H^{\sigma}}).
\ee
Using Proposition \ref{LocalS1}, and the estimate \eqref{stab-estim1} a bound of the error term \eqref{err-decomp} is given by,
\be \label{global-estim2}
||e^{n+1}||_{H^{r}} \le M_T\tau^2 + e^{L_n \tau}||e^n||_{H^{r}}, \quad e^0 = 0.
\ee
The global error estimate then easily follows by induction on the above inequality \eqref{global-estim2} once the following uniform bound is obtained:
\be\label{bound_sigma}
\sup_{n\tau \le T} || u^n||_{H^{\sigma}} < +\infty,
\ee
for some $\sigma > \frac{d}{2}$, and for sufficiently small time step $\tau$. In the remainder of the proof we establish the bound \eqref{bound_sigma} for appropriate choices of $\sigma$ depending on the $H^{r}$-norm considered.

In the regime $r > \frac{d}{2}$ we take $\sigma = r$, and the result follows by the classical {\it Lady Windermere's fan} argument (\cite{LW}). Indeed, the uniform bound \eqref{bound_sigma} easily follows for sufficiently small $\tau$ by a bootstrap argument on the estimate \eqref{global-estim2}.
%
%
%

Using a refined global error analysis one can push down the error analysis to the $H^{r}$-norm for $r \le \frac{d}{2}$. 
We take $\sigma = \sigma_0$ where $\sigma_0$ is given in equation \eqref{sigma_0}. In order to show the uniform bound \eqref{bound_sigma} we establish fractional convergence of the scheme \eqref{first-order} in the higher order Sobolev space $H^{\sigma_0}$.
Namely, we show that there exists $\delta > 0$ such that the following estimate holds
\be \label{global-estim3}
||e^{n+1}||_{H^{\sigma_0}} \le M_T\tau^{1+\delta} + e^{L_n \tau}||e^n||_{H^{\sigma_0}}.
\ee
where $L_n= C(||u^n||_{H^{\sigma_0}}, ||u(t_n)||_{H^{\sigma_0}}, ||V||_{H^{\sigma_0}})$. 
Using the decomposition \eqref{err-decomp}, and the bound \eqref{stab-estim1} with $r = \sigma_0$, we are left to show the following local error estimate,
\begin{align}\label{R_sigma_0}
||\mathcal{R}_1(\tau,t_n)||_{H^{\sigma_0}} \le M_T\tau^{1+\delta},
\end{align}
in order to obtain the bound \eqref{global-estim3}.
We obtain the bound \eqref{R_sigma_0} by an interpolation argument. We first show a bound on the remainder $\mathcal{R}^1(\tau,t_n)$ in $H^{r_1}$-norm.
By using Duhamel's formula and by construction of our numerical scheme \eqref{first-order} we have 
$$
\mathcal{R}^1(\tau,t_n) = \int_0^\tau e^{i(\tau-s)\Delta}f(u(t_n)+s, \overline{u}(t_n)+s, V) ds - i \tau e^{i\tau \Delta}\left(u(t_n) \varphi_1(-i \tau \Delta ) V + (u(t_n))^2 \varphi_1(-2i \tau\Delta)\bar{u}(t_n) \right).
$$
One can estimate each of the above terms separately using the first estimate in equation \eqref{A2.2} with $r = r_1$, and $\sigma = \sigma_0$. Indeed, this yields
\begin{align}\label{local-r_1}
||\mathcal{R}^1(\tau,t_n)||_{H^{r_1}} &\le C_{r_1}(||u(t_n)||_{H^{\sigma_0}}, ||u(t_n)||_{H^{r_1}}, ||V|| _{H^{\sigma_0}}) \tau\\ \nonumber
&\le C_{r_1, T} \tau,
\end{align}
where to obtain the last line we recall from equation \eqref{sig_0_b} that $r_1 > \sigma_0 $.
Finally, since $r_1 > \sigma_0 > r$ there exists $\theta \in (0,1)$ such that 
$$
||\mathcal{R}^1(\tau,t_n)||_{H^{\sigma_0}} \le ||\mathcal{R}^1(\tau,t_n)||_{H^{r_1}}^\theta ||\mathcal{R}^1(\tau,t_n)||_{H^{r}}^{1-\theta}.
$$
Using the local error estimates \eqref{local-r_0} and \eqref{local-r_1} we have
\begin{equation*}
||\mathcal{R}^1(\tau,t_n)||_{H^{\sigma_0}} \le M_T \tau^{2-\theta}
\end{equation*}
where $2-\theta>0$. Hence we have shown the bounds \eqref{R_sigma_0} and \eqref {global-estim3} with $\delta = 1-\theta$.
This yields the desired bound \eqref{bound_sigma} with $\sigma = \sigma_0$, by a classical bootstrap argument on equation \eqref{global-estim3}. The first order convergence of the scheme \eqref{first-order} follows by induction using the global bound \eqref{global-estim2}, with $\sigma = \sigma_0$. 
\end{proof}
\section{Second order scheme and analysis}\label{second}
The idea to derive a higher order scheme is to iterate Duhamel's formula \eqref{Duhamel}, and to Taylor expand $f$ around $e^{i\tau \Delta}v$, where we let $v=u_0$. For a second order scheme this yields the following expansion,
%
\begin{align}\label{secondOrder}
u(\tau) 
&=e^{i\tau \Delta}\left[ v -i\mathcal{J}_1(\tau, \Delta, v) -i \int_0^\tau e^{-i\zeta_1 \Delta} [D_{1}f(e^{i\zeta_{1} \Delta}v, e^{-i\zeta_{1} \Delta}\bar{v},V) \cdot e^{i\zeta_{1} \Delta} \mathcal{J}_1(\zeta_{1}, \Delta, v)] d\zeta_{1} \right. \\ \nonumber
& \left.+ i\int_0^\tau e^{-i\zeta_1 \Delta} [D_{2}f(e^{i\zeta_{1} \Delta}v, e^{-i\zeta_{1} \Delta}\bar{v},V) \cdot e^{-i\zeta_{1} \Delta} \overline{\mathcal{J}_1(\zeta_{1}, \Delta, v)}] d\zeta_{1}  \right] + O(\tau^3)
\end{align}
where 
\be\label{derivs}
D_{1} f(v, \bar{v}, V) = -i(V + 2v\bar{v}), \ \text{and}  \ D_{2}f(v,\bar{v}, V) = -iv^2,
\ee
and where we Taylor expanded $f$ around $e^{i\tau \Delta}v$ up to second order in order to obtain a remainder of order three.
Next, the aim is to derive a second order approximation to the integrals appearing in the above expansion \eqref{secondOrder}. 

First, we treat the iterated integrals appearing in the above expression, namely the third and fourth term in equation \eqref{secondOrder}. By a standard Taylor expansion we linearize the exponentials appearing in these iterated integrals. For both $v,V \in H^2$, this yields
$$-ie^{i\zeta_{1} \Delta} \mathcal{J}_1(\zeta_{1}, \Delta, v) = \zeta_1 f(v,\bar{v},V) + O(\zeta_{1}^2 \left(\Delta v+\Delta V)\right).$$
Using the above we make the following second order approximation of the iterated integrals in  equation \eqref{secondOrder};
\begin{align} \label{iterint1} 
-i\int_0^\tau e^{-i\zeta_1 \Delta} [D_{1}f(e^{i\zeta_{1} \Delta}v, e^{-i\zeta_{1} \Delta}\bar{v},V) \cdot e^{i\zeta_{1} \Delta} \mathcal{J}_1(\zeta_{1}, \Delta, v)] d\zeta_{1} = \int_0^\tau \zeta_1 D_{1}f(v,\bar{v}, V) &\cdot f(v,\bar{v}, V)d\zeta_1 \\ \nonumber
&+ O\left(\tau^3 (\Delta v+\Delta V)\right), \\ \nonumber
i\int_0^\tau e^{-i\zeta_1 \Delta} [D_{2}f(e^{i\zeta_{1} \Delta}v, e^{-i\zeta_{1} \Delta}\bar{v},V) \cdot e^{-i\zeta_{1} \Delta} \overline{\mathcal{J}_1(\zeta_{1}, \Delta, v)}] d\zeta_{1} = \int_0^\tau \zeta_1 D_{2}f(v,\bar{v}, V) &\cdot \overline{f(v,\bar{v}, V)}d\zeta_1\\ \nonumber
&+ O\left(\tau^3 (\Delta v+\Delta V)\right).
\end{align}

The above calculations motivate the choice of the expansion for $u$ stated in the following lemma.
\begin{lem}\label{ordre2.0}
Let $v = u^0$. At second order $u$ can be expanded as
$$u(\tau) = u_2(\tau) + R_{2,0}(\tau)$$
with
$$u_2(\tau) = e^{i\tau \Delta}v -i e^{i\tau \Delta} \mathcal{J}_1(\tau, \Delta, v) - \frac{\tau^2}{2} e^{i\tau \Delta} (|v|^4v + 3v |v|^2V - |v|^2v\bar{V} + vV^2),
$$
and
\begin{align*}
R_{2,0}(\tau) 
&= \int_0^\tau e^{i(\tau-\zeta_1) \Delta}[ f(u(\zeta_1), \bar{u}(\zeta_1), V) - f(e^{i\zeta_1}v, e^{-i\zeta_1}\bar{v}, V) ] d\zeta_1 \\
& + e^{i\tau \Delta} \int_0^\tau \zeta_1 (|v|^4v + 3v |v|^2V - |v|^2v\bar{V} + vV^2) d\zeta_1.
\end{align*}
\end{lem}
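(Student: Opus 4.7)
My plan is to combine Duhamel's formula with the second-order Taylor expansion already motivated by \eqref{secondOrder} and \eqref{iterint1}, keeping all correction terms exact rather than as asymptotic remainders, so that the decomposition $u(\tau) = u_2(\tau) + R_{2,0}(\tau)$ holds as an algebraic identity. I would first iterate Duhamel twice: substituting the mild formulation of $u(\zeta_1)$ into $u(\tau) = e^{i\tau\Delta}v + \int_0^\tau e^{i(\tau-\zeta_1)\Delta} f(u(\zeta_1), \bar u(\zeta_1), V)\,d\zeta_1$ and Taylor-expanding $f$ around $(e^{i\zeta_1\Delta}v, e^{-i\zeta_1\Delta}\bar v, V)$ produces the expansion \eqref{secondOrder}. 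The leading correction there is exactly $-ie^{i\tau\Delta}\mathcal{J}_1(\tau,\Delta,v)$ by the definition \eqref{PO}, and the second and third terms are the iterated integrals involving $D_1 f$ and $D_2 f$.

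Next, I would apply the linearization $-ie^{i\zeta_1\Delta}\mathcal{J}_1(\zeta_1,\Delta,v) = \zeta_1 f(v,\bar v,V) + O(\zeta_1^2(\Delta v + \Delta V))$ underlying \eqref{iterint1}, which isolates the principal second-order contribution
\[e^{i\tau\Delta}\int_0^\tau \zeta_1 \bigl(D_1 f(v,\bar v,V)\cdot f(v,\bar v,V) + D_2 f(v,\bar v,V)\cdot \overline{f(v,\bar v,V)}\bigr)\,d\zeta_1,\]
the rest being absorbed into the remainder. Using \eqref{derivs} together with $f(v,\bar v, V) = -i(Vv + |v|^2 v)$, I would carry out the algebraic simplification
\[D_1 f\cdot f + D_2 f\cdot \bar f = -\bigl(|v|^4 v + 3v|v|^2 V - |v|^2 v \bar V + vV^2\bigr),\]
which, after integrating $\zeta_1$ over $[0,\tau]$, yields the explicit $-\tfrac{\tau^2}{2}e^{i\tau\Delta}(\cdots)$ term appearing in $u_2(\tau)$.

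With $u_2(\tau)$ so determined, the identity $u(\tau) = u_2(\tau) + R_{2,0}(\tau)$ becomes a purely algebraic check: the two explicit $\tfrac{\tau^2}{2}e^{i\tau\Delta}(\cdots)$ pieces appearing in $u_2$ and in $R_{2,0}$ (after rewriting $\tfrac{\tau^2}{2} = \int_0^\tau \zeta_1\,d\zeta_1$) have opposite signs and cancel, leaving the standard first-order Duhamel identity obtained by adding and subtracting $f(e^{i\zeta_1\Delta}v, e^{-i\zeta_1\Delta}\bar v, V)$ inside the mild formulation. The only substantive step is therefore the explicit simplification of $D_1 f\cdot f + D_2 f\cdot \bar f$, whose sign bookkeeping dictates the specific combination $|v|^4 v + 3v|v|^2 V - |v|^2 v\bar V + vV^2$ arising from the coupling of the cubic nonlinearity with the potential. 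The smallness $R_{2,0}(\tau) = O(\tau^3)$, if later needed, is a separate issue handled in the subsequent local error analysis.
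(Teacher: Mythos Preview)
Your proposal is correct and follows essentially the same approach as the paper: the identity $u(\tau)=u_2(\tau)+R_{2,0}(\tau)$ is verified as a pure algebraic consequence of Duhamel's formula \eqref{Duhamel} and the definition \eqref{PO} of $\mathcal{J}_1$, with the $\tfrac{\tau^2}{2}$ terms cancelling, while the explicit computation of $D_1 f\cdot f + D_2 f\cdot \bar f = -(|v|^4 v + 3v|v|^2 V - |v|^2 v\bar V + vV^2)$ serves to motivate the particular form of the last term in $u_2(\tau)$. The paper's proof is equally brief and invokes exactly these ingredients.
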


\begin{proof} The result immediately follows 
by recalling the definition of the principal oscillations \eqref{PO}, and Duhamel's formula \eqref{Duhamel}. Moreover, we note that by simple calculations one has 
\begin{align*}
&D_{1}f(v,\bar{v}, V) \cdot f(v,\bar{v},V) = -(V^2v + 3v^2 \bar{v}V + 2v^3 \bar{v}^2), \quad
D_{2}f(v,\bar{v}, V) \cdot \overline{f(v,\bar{v},V)} = \bar{V}\bar{v}v^2 + v^3\bar{v}^2,
\end{align*}
and hence by equation \eqref{derivs} we have,
\be\label{last_term}
D_{1}f(v,\bar{v}, V) \cdot f(v,\bar{v},V) + D_{2}f(v,\bar{v}, V) \cdot \overline{f(v,\bar{v},V)} = -(|v|^4v + 3v |v|^2V - |v|^2v\bar{V} + vV^2).
\ee
The above calculations together with equations \eqref{secondOrder} and \eqref{iterint1} motivates the inclusion of the last term in the expansion of $u_2(\tau)$.
\end{proof}
It remains to establish a low-regularity second order approximation to the principal oscillatory integral \eqref{PO}. 
We first recall from Section \ref{general1} that in order to derive a low-regularity approximation of $\mathcal{J}_1$ at first order we used the filtered function \eqref{filterfcn} and its first order Taylor expansion \eqref{Taylor1st}. Analogously, to obtain a second order approximation of $\mathcal{J}_1$ we Taylor expand equation \eqref{filterfcn} around $s=0$ up to second order, and include the first two terms of this expansion into our scheme. This yields,
\begin{align}\label{filterOrder2}
\mathcal{N}(\tau, \zeta, \zeta, \Delta, v) 
&= \mathcal{N}(\tau, 0, \zeta, \Delta, v) + \zeta \partial_s \mathcal{N}(\tau,0, \zeta, \Delta, v) + \int_0^\zeta \int_0^s \partial_{s_1}^2 \mathcal{N}(\tau, s_1, \zeta, \Delta, v)ds_{1} ds.
\end{align}
Equivalently, using the filtered function \eqref{filterfcn}, the above expression is written as
\begin{align}\label{YvainOrder2}
e^{-i\zeta \Delta} f(e^{i\zeta \Delta}v,e^{-i\zeta \Delta}\bar{v}, V) 
&=  f(v,e^{-2i\zeta\Delta }  \bar{v}, e^{-i\zeta \Delta }V) +\zeta \mathcal{C}[f,i\Delta](v,e^{-2i\zeta\Delta}\bar{v}, e^{-i\zeta \Delta}V) \\ \nonumber
&+\int_0^\zeta \int_0^se^{-is_1\Delta} \mathcal{C}^2[f,i\Delta](e^{is_1 \Delta}v, e^{is_1\Delta}e^{-2i\zeta\Delta}\bar{v},  e^{is_1\Delta}e^{-i\zeta\Delta}V)ds_1ds
\end{align}
where the local error structure is governed by the second-order commutator type term
\begin{align}\label{com-2}
\mathcal{C}^2[f,i\Delta](u,v,w) := \ &\mathcal{C}[\mathcal{C}[f,i\Delta],i\Delta](u,v,w) \\ \nonumber
= &\sum_{l=1}^{d}(\partial_{l}^2u \partial_{l}^2 w) + (2u+v)(\partial_{l}^2u \partial_{l}^2 v) +  (\partial_{l} v)^2 \partial_{l}^2u + 2(\partial_{l} u)^2 \partial_{l}^2v + \partial_{l} v \partial_{l} u (2\partial_{l}^2u + \partial_{l}^2 v).
\end{align}

In practical computations we need to address the stability issues caused by including into the scheme the second term $\mathcal{C}[f,i\Delta](v,e^{-2i\zeta\Delta}\bar{v}e^{-i\zeta \Delta}V)$ which has the form \eqref{com}, since it involves spatial derivatives. 
Different approaches can be made to treat this issue and guarantee the stability of the scheme and in what follows we offer two different approaches. The first approach is based on \cite{RS} and consists in first introducing a stabilization in the Taylor series expansion \eqref{filterOrder2} based on finite difference approximations. The second approach relies on
directly embedding the commutator term appearing in equation \eqref{YvainOrder2} into the discretization, and then stabilizes the scheme a posteriori by the use of a properly chosen filter function (see also the general approach in the work \cite{ABBS}).
\subsection{A first approach to guarantee stability}\label{sec:stab1}
A first approach consists in stabilizing the second term of equation \eqref{filterOrder2}. This may be done by introducing the following finite difference approximation of $\partial_s \mathcal{N}(\tau,0, \zeta, \Delta, v)$:
\be\label{Stab1}
\partial_s \mathcal{N}(\tau,0, \zeta, \Delta, v) = \frac{1}{\tau}(\mathcal{N}(\tau,\tau, \zeta, \Delta, v) - \mathcal{N}(\tau,0, \zeta, \Delta, v))+O(\tau \partial_{s}^2\mathcal{N}(\tau,\eta, \zeta, \Delta, v))
\ee
for some $\eta \in [0,\tau]$, and where
%
\begin{align}\label{N2}
\partial_s^2 \mathcal{N}(\tau,\eta, \zeta, \Delta, v) = e^{-i\eta\Delta} \mathcal{C}^2[f,i\Delta](e^{i\eta \Delta}v, e^{i\eta\Delta}e^{-2i\zeta\Delta}\bar{v},  e^{i\eta\Delta}e^{-i\zeta\Delta}V). 
\end{align}
The above expansion comes into play in the following lemma, where we obtain a stable second order approximation of the principal oscillation $\mathcal{J}_1$.
\begin{lem}\label{stab2.1}
At second order the principal oscillations can be expanded by
\begin{align*}
\mathcal{J}_1(\tau, \Delta, v) 
= \tau &\left(v \varphi_1(-i \tau \Delta ) V + v^2 \varphi_1(-2i \tau\Delta)\bar{v} \right) \\
&+ \tau e^{-i\tau \Delta}\left((e^{i\tau \Delta}v) \varphi_2(-i \tau \Delta ) (e^{i\tau \Delta}V) + (e^{i\tau \Delta}v)^2 \varphi_2(-2i\tau \Delta) (e^{i\tau \Delta}\bar{v}) \right)\\
&-\tau \left(v \varphi_2(-i \tau \Delta ) V + v^2\varphi_2(-2i\tau \Delta)\bar{v} \right) + iR_{2,1}^1(\tau)
\end{align*}
where $\varphi_2(z) = \frac{1}{z}(e^z - \varphi_1(z))$ and the remainder is given by
\begin{align}\label{R^1_2,1}
R_{2,1}^1(\tau) 
&= \int_0^\tau \int_0^\zeta \int_0^se^{-is_1\Delta} \mathcal{C}^2[f,i\Delta](e^{is_1 \Delta}v, e^{is_1\Delta}e^{-2i\zeta\Delta}\bar{v},  e^{is_1\Delta}e^{-i\zeta\Delta}V)ds_1ds d\zeta\\\nonumber
&+ \int_0^\tau \zeta \int_0^1 \int_0^{\tau s}e^{-is_1\Delta} \mathcal{C}^2[f,i\Delta](e^{is_1 \Delta}v, e^{is_1\Delta}e^{-2i\zeta\Delta}\bar{v},  e^{is_1\Delta}e^{-i\zeta\Delta}V)ds_1ds d\zeta.
\end{align}
\end{lem}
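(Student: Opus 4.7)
The plan is to obtain the expansion by combining the second-order Taylor expansion \eqref{filterOrder2} of the filter function $\mathcal{N}(\tau,s,\zeta,\Delta,v)$ in its second argument with the finite-difference identity \eqref{Stab1} for $\partial_s\mathcal{N}(\tau,0,\zeta,\Delta,v)$. After integrating over $\zeta\in[0,\tau]$, the three main contributions will produce the stated $\varphi_1$ and $\varphi_2$ expressions, while the Taylor remainders from \eqref{filterOrder2} together with the finite-difference error will combine into the two pieces of $R_{2,1}^1(\tau)$ in \eqref{R^1_2,1}.

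The first concrete step is to rearrange the second-order Taylor expansion of $\mathcal{N}(\tau,\tau,\zeta,\Delta,v)$ around $s=0$ into
\begin{equation*}
\zeta\,\partial_s\mathcal{N}(\tau,0,\zeta,\Delta,v) = \frac{\zeta}{\tau}\bigl[\mathcal{N}(\tau,\tau,\zeta,\Delta,v) - \mathcal{N}(\tau,0,\zeta,\Delta,v)\bigr] - \frac{\zeta}{\tau}\int_0^\tau\!\!\int_0^s \partial_{s_1}^2\mathcal{N}(\tau,s_1,\zeta,\Delta,v)\,ds_1\,ds,
\end{equation*}
substitute this into \eqref{filterOrder2}, and then integrate the resulting identity over $\zeta\in[0,\tau]$. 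This splits $\mathcal{J}_1(\tau,\Delta,v)$ into three explicit integrals plus two remainders.

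For the main contributions, I would use on $L^2$ the operator identities $\int_0^\tau e^{-i\alpha\zeta\Delta}\,d\zeta = \tau\varphi_1(-i\alpha\tau\Delta)$ for $\alpha\in\{1,2\}$, and, by integration by parts (or directly from $\varphi_2(z)=(e^z-\varphi_1(z))/z$), $\int_0^\tau (\zeta/\tau)\,e^{-i\alpha\zeta\Delta}\,d\zeta = \tau\varphi_2(-i\alpha\tau\Delta)$. Applied to the explicit expression $\mathcal{N}(\tau,0,\zeta,\Delta,v) = (e^{-i\zeta\Delta}V)\,v + (e^{-2i\zeta\Delta}\bar v)\,v^2$, and to $\mathcal{N}(\tau,\tau,\zeta,\Delta,v)$ after extracting the outer $e^{-i\tau\Delta}$ prefactor, these identities produce term by term the three $\varphi$-quantities of the statement. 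For the remainder, rewriting $\partial_{s_1}^2\mathcal{N}$ via \eqref{N2} as $e^{-is_1\Delta}\mathcal{C}^2[f,i\Delta](e^{is_1\Delta}v,\,e^{is_1\Delta}e^{-2i\zeta\Delta}\bar v,\,e^{is_1\Delta}e^{-i\zeta\Delta}V)$, the Taylor remainder from \eqref{filterOrder2} gives directly the first triple integral in \eqref{R^1_2,1}, while the finite-difference error yields the second term after the change of variables $s\mapsto \tau s$, which maps the domain $\{s\in[0,\tau],\,s_1\in[0,s]\}$ to $\{s\in[0,1],\,s_1\in[0,\tau s]\}$.

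The main technical care is bookkeeping: the pointwise multiplications by $v$ and $v^2$ do not commute with the operators $\varphi_j(-i\alpha\tau\Delta)$ or with $e^{\pm i\tau\Delta}$, so the order of factors must be preserved throughout; in particular the outer $e^{-i\tau\Delta}$ coming from $\mathcal{N}(\tau,\tau,\zeta,\Delta,v)$ has to be kept in front in order to match the structure $\tau e^{-i\tau\Delta}[\cdots]$ of the middle term in the statement. Once these algebraic manipulations are handled consistently, the proof reduces to the two $\varphi_1/\varphi_2$ operator identities and to the change of variable identifying the finite-difference remainder with the second piece of \eqref{R^1_2,1}.
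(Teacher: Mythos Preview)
Your proposal is correct and follows essentially the same approach as the paper: the paper likewise plugs the exact integral form of the finite difference \eqref{Stab1} into the second-order expansion \eqref{filterOrder2}, integrates over $\zeta\in[0,\tau]$, and then evaluates the resulting integrals of $\mathcal{N}(\tau,0,\zeta,\Delta,v)$ and $\frac{\zeta}{\tau}\mathcal{N}(\tau,\tau,\zeta,\Delta,v)$, $\frac{\zeta}{\tau}\mathcal{N}(\tau,0,\zeta,\Delta,v)$ explicitly via the $\varphi_1$ and $\varphi_2$ operator identities. Your explicit justification of $\int_0^\tau(\zeta/\tau)e^{-i\alpha\zeta\Delta}\,d\zeta=\tau\varphi_2(-i\alpha\tau\Delta)$ and of the change of variables $s\mapsto\tau s$ for the finite-difference remainder are exactly the computations the paper leaves implicit.
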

%
\begin{proof} Using the filtered function \eqref{filterfcn}, and plugging the finite difference \eqref{Stab1} into the Taylor expansion \eqref{filterOrder2} we obtain
\begin{align*}
\qquad \mathcal{J}_1(\tau, \Delta, v) 
&=\int_0^\tau \mathcal{N}(\tau, \zeta, \zeta, \Delta, v) d\zeta \\
&= \int_0^\tau  \mathcal{N}(\tau,0, \zeta, \Delta, v)d\zeta + \frac{1}{\tau}\int_0^\tau \zeta (\mathcal{N}(\tau,\tau, \zeta, \Delta, v) - \mathcal{N}(\tau,0, \zeta, \Delta, v))d\zeta \\
&\quad + \int_0^\tau \zeta \int_0^1 \int_0^{\tau s} \partial_{s_1}^2 \mathcal{N}(\tau, s_1, \zeta, \Delta, v)ds_{1} dsd\zeta + \int_0^\tau \int_0^\zeta \int_0^s \partial_{s_1}^2 \mathcal{N}(\tau, s_1, \zeta, \Delta, v)ds_{1} dsd\zeta.
\end{align*}
Using the filtered function \eqref{filterfcn}, equation \eqref{N2} and the definition \eqref{R^1_2,1} of $R^1_{2,1}(\tau)$ it follows from the above that,
\begin{align*}
\mkern-36mu\mkern-36mu\mkern-36mu\mkern-36mu \mathcal{J}_1(\tau, \Delta, v) 
&= \int_0^\tau ([e^{-i\zeta \Delta} V]v + [e^{-2i\zeta \Delta} \bar{v}]v^2)d\zeta
 \\&\quad
 + \frac{1}{\tau}e^{-i\tau \Delta}\int_0^\tau \zeta ([e^{-i\zeta \Delta}e^{i\tau \Delta} V] (e^{i\tau \Delta}v) + [e^{-2i\zeta\Delta}e^{i\tau \Delta} \bar{v}](e^{i\tau \Delta}v)^2)d\zeta \\
&\quad - \frac{1}{\tau}\int_0^\tau \zeta ([e^{-i\zeta\Delta} V] v +[e^{-2i\zeta\Delta} \bar{v}]v^2)d\zeta +iR_{2,1}^1(\tau)\\
&= \tau \left(v \varphi_1(-i \tau \Delta ) V + v^2 \varphi_1(-2i \tau\Delta)\bar{v} \right) 
\\&\quad+ \tau e^{-i\tau \Delta}\left((e^{i\tau \Delta}v) \varphi_2(-i \tau \Delta ) (e^{i\tau \Delta}V) + (e^{i\tau \Delta}v)^2 \varphi_2(-2i\tau \Delta) (e^{i\tau \Delta}\bar{v}) \right)\\
&\quad-\tau \left(v \varphi_2(-i \tau \Delta ) V + v^2\varphi_2(-2i\tau \Delta)\bar{v} \right) + iR_{2,1}^1(\tau)
\end{align*}
which concludes the proof.
\end{proof}
%
By merging the two preceding lemmas \ref{ordre2.0} and \ref{stab2.1}, we obtain the following second order low-regularity scheme for \eqref{evGP}.
%
%
 %
\begin{cor}\label{cor:2scheme_FD} The exact solution $u$ of \eqref{evGP} can be expanded as
\begin{align}\label{2scheme_FD}
u(t_n + \tau) 
&= e^{i\tau \Delta}u(t_n) - i \tau e^{i\tau \Delta}\left(u(t_n) \varphi_1(-i \tau \Delta ) V + u(t_n)^2 \varphi_1(-2i \tau\Delta)\bar{u}(t_n) \right) \\\nonumber
&\quad -i\tau \left((e^{i\tau \Delta}u(t_n)) \varphi_2(-i \tau \Delta ) (e^{i\tau \Delta}V) + (e^{i\tau \Delta}u(t_n))^2 \varphi_2(-2i\tau \Delta) e^{i\tau \Delta}\bar{u}(t_n) \right)\\\nonumber
&\quad +i\tau e^{i\tau \Delta}\left(u(t_n) \varphi_2(-i \tau \Delta ) V + u(t_n)^2\varphi_2(-2i\tau \Delta)\bar{u}(t_n) \right) \\ \nonumber
&\quad-\frac{\tau^2}{2} e^{i\tau \Delta}\left(|u(t_n)|^4u(t_n) + 3u(t_n) |u(t_n)|^2V - |u(t_n)|^2u(t_n)\bar{V} + u(t_n)V^2 \right) \\\nonumber
&\quad+ \mathcal{R}^1_2(\tau, t_n)
\end{align}
%
where the remainder is given by 
\be\label{R^1_2}
\begin{split}
\mathcal{R}^1_2(\tau, t_n)
&= \int_0^\tau e^{i(\tau-\zeta) \Delta}[ f(u(t_n +\zeta), \bar{u}(t_n+ \zeta), V) - f(e^{i\zeta}u(t_n), e^{-i\zeta}\bar{u}(t_n), V) ] d\zeta \\
&\quad + e^{i\tau \Delta} \int_0^\tau \zeta \left(|u(t_n)|^4u(t_n) + 3u(t_n) |u(t_n)|^2V - |u(t_n)|^2u(t_n)\bar{V} + u(t_n)V^2 \right) d\zeta\\
&\quad + \int_0^\tau \int_0^\zeta \int_0^se^{i(\tau -s_1)\Delta} \mathcal{C}^2[f,i\Delta](e^{is_1 \Delta}u(t_n), e^{is_1\Delta}e^{-2i\zeta\Delta}\bar{u}(t_n),  e^{is_1\Delta}e^{-i\zeta\Delta}V)ds_1ds d\zeta\\
&\quad + \int_0^\tau \zeta \int_0^1 \int_0^{\tau s}e^{i(\tau -s_1)\Delta} \mathcal{C}^2[f,i\Delta](e^{is_1 \Delta}u(t_n), e^{is_1\Delta}e^{-2i\zeta\Delta}\bar{u}(t_n),  e^{is_1\Delta}e^{-i\zeta\Delta}V)ds_1ds d\zeta.
\end{split}
\ee
and where the explicit expression for the commutator is given in equation \eqref{com-2}. 
\end{cor}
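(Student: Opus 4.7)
The plan is to obtain the corollary simply by merging Lemma \ref{ordre2.0} and Lemma \ref{stab2.1}, viewing the time interval $[t_n, t_n + \tau]$ as an initial value problem with initial data $u(t_n)$. By the semigroup/flow property of the Schr\"odinger equation, the expansion of Lemma \ref{ordre2.0} applied with $v = u(t_n)$ yields an identity for $u(t_n + \tau)$, with remainder $R_{2,0}(\tau)$ relabelled to account for the shift in time (this replaces $u(\zeta_1)$ by $u(t_n+\zeta_1)$ inside the first integral).

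Next I would substitute the expansion of $\mathcal{J}_1(\tau, \Delta, u(t_n))$ provided by Lemma \ref{stab2.1} into the term $-ie^{i\tau\Delta}\mathcal{J}_1(\tau, \Delta, u(t_n))$ appearing in $u_2(\tau)$. Each of the three explicit $\tau$-terms in Lemma \ref{stab2.1}, once multiplied by $-ie^{i\tau\Delta}$, produces exactly one of the three lines in the statement of the corollary: the first term gives the $-i\tau e^{i\tau\Delta}(u(t_n)\varphi_1(\ldots)V + \ldots)$ contribution; the second term, thanks to the prefactor $e^{-i\tau\Delta}$ in Lemma \ref{stab2.1}, cancels with $e^{i\tau\Delta}$ and yields the unshifted $-i\tau(e^{i\tau\Delta}u(t_n))\varphi_2(\ldots)(e^{i\tau\Delta}V)+\ldots$ contribution; the third term gives the $+i\tau e^{i\tau\Delta}(u(t_n)\varphi_2(\ldots)V + \ldots)$ contribution. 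The explicit $-\frac{\tau^2}{2}e^{i\tau\Delta}(|u(t_n)|^4u(t_n)+\ldots)$ term in the statement comes verbatim from $u_2(\tau)$ in Lemma \ref{ordre2.0}.

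Finally, I would collect the remainders. The remainder $\mathcal{R}^1_2(\tau, t_n)$ of the statement equals $R_{2,0}(\tau)$ (shifted to start at $t_n$) plus $-ie^{i\tau\Delta}(iR^1_{2,1}(\tau)) = e^{i\tau\Delta}R^1_{2,1}(\tau)$ from Lemma \ref{stab2.1}. Comparing with \eqref{R^1_2}, the first line of $\mathcal{R}^1_2(\tau,t_n)$ comes from the first line of $R_{2,0}(\tau)$, the second line comes from the second line of $R_{2,0}(\tau)$ (noting that $\int_0^\tau \zeta\, d\zeta = \tau^2/2$ is already extracted into the explicit $|v|^4v+\ldots$ term, leaving an equivalent integral formulation), and the last two lines come from $e^{i\tau\Delta}R^1_{2,1}(\tau)$ after moving $e^{i\tau\Delta}$ inside each integral using the group property $e^{i\tau\Delta}e^{-is_1\Delta} = e^{i(\tau-s_1)\Delta}$.

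No real obstacle is present here, since both component lemmas have already been proved; the proof is entirely bookkeeping. The only point requiring minor care is tracking the factors of $i$ and the compositions of $e^{\pm i\xi\Delta}$ to ensure the second group of $\varphi_2$-terms in the statement appears without an outer $e^{i\tau\Delta}$ (this follows from the $e^{-i\tau\Delta}$ prefactor in the statement of Lemma \ref{stab2.1}), and that the sign in front of $\mathcal{J}_1$ is $-i$ so that multiplication by $iR^1_{2,1}$ produces a plus sign in the combined remainder.
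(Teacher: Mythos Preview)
Your proposal is correct and follows exactly the approach of the paper, which simply states that the corollary is obtained ``by merging the two preceding lemmas \ref{ordre2.0} and \ref{stab2.1}''. Your careful tracking of the factors of $i$, the cancellation $e^{i\tau\Delta}e^{-i\tau\Delta}=I$ in the second $\varphi_2$-block, and the identification $e^{i\tau\Delta}e^{-is_1\Delta}=e^{i(\tau-s_1)\Delta}$ in the remainder terms is precisely the bookkeeping the paper leaves implicit.
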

%
%
\subsection{A second approach to guarantee stability}\label{sec:stab2}
We next present a second approach to the second order approximation of the principal oscillations $\mathcal{J}_1$. In contrast to the preceding section we will first write the approximation in terms of the commutator, then stabilize the scheme by the use of a properly chosen filter function. 

\begin{lem}
To second order the principal oscillations can be expanded by
\be
\begin{split}
\mathcal{J}_1(\tau, \Delta, v) 
&= \tau \left(v \varphi_1(-i \tau \Delta ) V + v^2 \varphi_1(-2i \tau\Delta)\bar{v} \right) + \tau^2 \mathcal{C}[f,i\Delta](v,\varphi_2(-2i\tau\Delta)\bar{v}, \varphi_2(-i\tau \Delta)V) \\
&\quad + iR_{2,1}^2(\tau)
\end{split}
\ee
where the remainder is given by,
\begin{align}\label{R_2,1}
R_{2,1}^2(\tau) 
&= \int_0^\tau \int_0^\zeta \int_0^se^{-is_1\Delta} \mathcal{C}^2[f,i\Delta](e^{is_1 \Delta}v, e^{is_1\Delta}e^{-2i\zeta\Delta}\bar{v},  e^{is_1\Delta}e^{-i\zeta\Delta}V)ds_1ds d\zeta.\\ \nonumber
\end{align}
\end{lem}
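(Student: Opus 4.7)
The strategy is essentially the one of Lemma \ref{stab2.1}, except we do \emph{not} replace the first-order Taylor term $\zeta \partial_s \mathcal{N}(\tau,0,\zeta,\Delta,v)$ by a finite-difference approximation; we keep it in its exact commutator form. Concretely, the plan is to start from
$$\mathcal{J}_1(\tau,\Delta,v) = \int_0^\tau \mathcal{N}(\tau,\zeta,\zeta,\Delta,v)\, d\zeta,$$
apply the second-order Taylor expansion \eqref{filterOrder2} of $\mathcal{N}$ in its second argument around $s=0$, and integrate term by term against $d\zeta$ on $[0,\tau]$.

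The zeroth-order Taylor term is treated as before: plugging in the explicit expression \eqref{N0} for $\mathcal{N}(\tau,0,\zeta,v)$ and using $\int_0^\tau e^{-i\beta\zeta\Delta}\,d\zeta = \tau\,\varphi_1(-i\beta\tau\Delta)$ for $\beta = 1,2$ (which is just the definition $\varphi_1(z) = (e^z-1)/z$ applied to $z = -i\beta\tau\Delta$) yields
$$\int_0^\tau \mathcal{N}(\tau,0,\zeta,v)\, d\zeta = \tau\bigl(v\,\varphi_1(-i\tau\Delta)V + v^2\,\varphi_1(-2i\tau\Delta)\bar v\bigr),$$
which is the first block in the statement.

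The key new step is the first-order Taylor term. From the expression for $\partial_s \mathcal{N}$ given just below \eqref{N0}, evaluated at $s=0$, we have $\partial_s \mathcal{N}(\tau,0,\zeta,v) = \mathcal{C}[f,i\Delta](v, e^{-2i\zeta\Delta}\bar v, e^{-i\zeta\Delta}V)$. Now the explicit formula \eqref{com} for the commutator shows that $\mathcal{C}[f,i\Delta](u,\cdot,\cdot)$ is \emph{linear} in both its second and its third arguments, and the operators $e^{-i\beta\zeta\Delta}$ (viewed as Fourier multipliers) commute with $\nabla$. These two facts let us pull the $\zeta$-integration inside the commutator, slot by slot. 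Combined with the identity $\int_0^\tau \zeta\, e^{-i\beta\zeta\Delta}\,d\zeta = \tau^2\,\varphi_2(-i\beta\tau\Delta)$, which follows by a single integration by parts and the definition $\varphi_2(z) = (e^z-\varphi_1(z))/z$, this produces exactly
$$\int_0^\tau \zeta\,\partial_s \mathcal{N}(\tau,0,\zeta,v)\, d\zeta = \tau^2\,\mathcal{C}[f,i\Delta]\bigl(v, \varphi_2(-2i\tau\Delta)\bar v,\, \varphi_2(-i\tau\Delta)V\bigr),$$
matching the second block of the statement.

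Finally, integrating the remainder from \eqref{filterOrder2} in $\zeta$ and inserting the formula \eqref{N2} for $\partial_s^2 \mathcal{N}$ reproduces the triple integral defining $iR_{2,1}^2(\tau)$ in \eqref{R_2,1} up to the conventional $i$-prefactor. The only subtle step is the manipulation of the first-order Taylor term: one must check carefully, using the explicit form \eqref{com}, that $\mathcal{C}[f,i\Delta]$ really is linear in its second and third slots (it is cubic only in the first slot) and that the multipliers $e^{-i\beta\zeta\Delta}$ pass through $\nabla$, so that the $\zeta$-averages can legitimately be absorbed into $\varphi_2(-2i\tau\Delta)\bar v$ and $\varphi_2(-i\tau\Delta)V$ respectively; everything else is bookkeeping.
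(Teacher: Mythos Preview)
Your proposal is correct and follows essentially the same route as the paper: expand $\mathcal{J}_1$ via the second-order Taylor formula \eqref{filterOrder2}/\eqref{YvainOrder2}, integrate the zeroth-order term to obtain the $\varphi_1$ block, integrate the first-order commutator term exactly to obtain the $\varphi_2$ block, and keep the double-integral Taylor remainder as $R_{2,1}^2$. This is precisely what the paper does (it simply says ``the second term could be integrated exactly using the structure of the commutator \eqref{com} and of the nonlinearity \eqref{nonlin}''), and you have supplied the missing details.

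One small sharpening: saying that $\mathcal{C}[f,i\Delta](u,\cdot,\cdot)$ is ``linear in both its second and third arguments'' is not by itself enough to pull the $\zeta$-integral into each slot independently---a bilinear term of the form $g(u)\,v\,w$ would also be linear in each slot separately yet would not allow $\int_0^\tau \zeta\, g(u)\,v(\zeta)\,w(\zeta)\,d\zeta$ to factor. What actually makes the step work is that, by \eqref{com}, the commutator \emph{decouples} as a sum of a term depending only on $(u,w)$ and a term depending only on $(u,v)$, with no mixed $v\!\cdot\! w$ contribution; this is what lets you replace $e^{-2i\zeta\Delta}\bar v$ and $e^{-i\zeta\Delta}V$ by $\varphi_2(-2i\tau\Delta)\bar v$ and $\varphi_2(-i\tau\Delta)V$ independently. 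Since you explicitly point to \eqref{com} for the verification, the argument is fine, but the stated reason should be this decoupling rather than mere linearity.
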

\begin{proof} Using the definition of the principal oscillations \eqref{PO} and equation \eqref{YvainOrder2} one has the following expansion,
%
\begin{align*}
\mathcal{J}_1(\tau, \Delta, v) 
&= \int_0^\tau f(v,e^{-2i\zeta\Delta }  \bar{v}, e^{-i\zeta \Delta }V)d\zeta + \int_0^\tau \zeta \mathcal{C}[f,i\Delta](v,e^{-2i\zeta\Delta}\bar{v}, e^{-i\zeta \Delta}V)d\zeta \\
&\quad+ \int_0^\tau \int_0^\zeta \int_0^se^{-is_1\Delta} \mathcal{C}^2[f,i\Delta](e^{is_1 \Delta}v, e^{is_1\Delta}e^{-2i\zeta\Delta}\bar{v},  e^{is_1\Delta}e^{-i\zeta\Delta}V)ds_1ds d\zeta \\
&= \tau \left(v \varphi_1(-i \tau \Delta ) V + v^2 \varphi_1(-2i \tau\Delta)\bar{v} \right) + \tau^2 \mathcal{C}[f,i\Delta](v,\varphi_2(-2i\tau\Delta)\bar{v}, \varphi_2(-i\tau \Delta)V)\\
&\quad+ iR^2_{2,1}(\tau).
\end{align*}
%
%
where the second term could be integrated exactly using the structure of the commutator \eqref{com} and of the nonlinearity \eqref{nonlin}
\end{proof}
The following lemma provides the second order low regularity integrator up to this step.
\begin{lem} The exact solution $u$ of \eqref{evGP} can be expanded as
\begin{align}\label{unstab2}
u(t_n + \tau) 
&= e^{i\tau \Delta}u(t_n) - i \tau e^{i\tau \Delta}\left(u(t_n) \varphi_1(-i \tau \Delta ) V + u(t_n)^2 \varphi_1(-2i \tau\Delta)\bar{u}(t_n) \right) \\\nonumber
&\quad -i\tau^2e^{i\tau \Delta} \mathcal{C}[f,i\Delta](u(t_n),\varphi_2(-2i\tau\Delta)\bar{u}(t_n), \varphi_2(-i\tau \Delta)V)\\\nonumber
&\quad -\frac{\tau^2}{2} e^{i\tau \Delta}(|u(t_n)|^4u(t_n) + 3u(t_n) |u(t_n)|^2V - |u(t_n)|^2u(t_n)\bar{V} + u(t_n)V^2 ) \\ \nonumber
&\quad + R_{2,2}^2(\tau, t_n)
\end{align}
where the remainder is given by
\begin{align*}
R_{2,2}^2(\tau) 
&= \int_0^\tau e^{i(\tau-\zeta) \Delta}[ f(u(t_n +\zeta), \bar{u}(t_n+ \zeta), V) - f(e^{i\zeta}u(t_n), e^{-i\zeta}\bar{u}(t_n), V) ] d\zeta \\
& + e^{i\tau \Delta} \int_0^\tau \zeta (|u(t_n)|^4u(t_n) + 3u(t_n) |u(t_n)|^2V - |u(t_n)|^2u(t_n)\bar{V} + u(t_n)V^2 ) d\zeta\\
&+ \int_0^\tau \int_0^\zeta \int_0^se^{i(\tau -s_1)\Delta} \mathcal{C}^2[f,i\Delta](e^{is_1 \Delta}u(t_n), e^{is_1\Delta}e^{-2i\zeta\Delta}\bar{u}(t_n),  e^{is_1\Delta}e^{-i\zeta\Delta}V)ds_1ds d\zeta.
\end{align*}
\end{lem}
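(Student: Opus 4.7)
The strategy is to combine Lemma \ref{ordre2.0} with the immediately preceding expansion of $\mathcal{J}_1$, using the autonomy of \eqref{evGP} to translate the statement from $t=0$ to $t=t_n$. Namely, by time-translation invariance of the Gross--Pitaevskii flow we may apply Lemma \ref{ordre2.0} with initial datum $v=u(t_n)$; this produces
\[
u(t_n+\tau) = e^{i\tau\Delta}u(t_n) - i\,e^{i\tau\Delta}\mathcal{J}_1(\tau,\Delta,u(t_n)) - \frac{\tau^2}{2}e^{i\tau\Delta}\bigl(|u(t_n)|^4u(t_n) + 3u(t_n)|u(t_n)|^2V - |u(t_n)|^2u(t_n)\bar V + u(t_n)V^2\bigr) + R_{2,0}(\tau,t_n),
\]
where $R_{2,0}(\tau,t_n)$ is the remainder of Lemma \ref{ordre2.0} with $v$ replaced by $u(t_n)$, i.e.\ exactly the first two lines of $R^2_{2,2}$.

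Next, the plan is to substitute into the $-i e^{i\tau\Delta}\mathcal{J}_1(\tau,\Delta,u(t_n))$ factor the second-approach expansion
\[
\mathcal{J}_1(\tau,\Delta,v) = \tau\bigl(v\varphi_1(-i\tau\Delta)V + v^2\varphi_1(-2i\tau\Delta)\bar v\bigr) + \tau^2\,\mathcal{C}[f,i\Delta]\bigl(v,\varphi_2(-2i\tau\Delta)\bar v,\varphi_2(-i\tau\Delta)V\bigr) + iR^2_{2,1}(\tau)
\]
established in the preceding lemma, again taken at $v=u(t_n)$. The three resulting contributions match term-for-term the $\varphi_1$ line, the $\mathcal{C}[f,i\Delta]$ line, and (after the cancellation $-i\cdot i=1$) the contribution $e^{i\tau\Delta}R^2_{2,1}(\tau)$ to the remainder. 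Combining with the two terms already present in Lemma \ref{ordre2.0} yields precisely the expansion \eqref{unstab2}.

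Finally, I would verify the claimed form of $R^2_{2,2}$. The two contributions from $R_{2,0}(\tau,t_n)$ give the first and second lines of the stated remainder. The contribution $e^{i\tau\Delta}R^2_{2,1}(\tau)$ pulls the outer propagator $e^{i\tau\Delta}$ inside the triple integral against the $e^{-is_1\Delta}$ appearing in \eqref{R_2,1}, producing the propagator $e^{i(\tau-s_1)\Delta}$ in front of $\mathcal{C}^2[f,i\Delta]$; this is exactly the third line of $R^2_{2,2}$. No analytic estimate is required at this stage — the local error bound on $R^2_{2,2}$ will be the job of the subsequent local error proposition.

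The proof is essentially algebraic bookkeeping: there is no hard analytic step, only a careful tracking of the factors of $i$, of $e^{\pm i\zeta\Delta}$, and of the sign conventions between Lemma \ref{ordre2.0} and the $\mathcal{J}_1$ expansion. The only point that deserves a sentence of justification is the time-translation used at the outset, which follows from the uniqueness of the mild solution provided by Theorem \ref{LWP} applied on the interval $[t_n,t_n+\tau]$ with data $u(t_n)$.
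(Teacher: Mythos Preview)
Your proposal is correct and follows exactly the route the paper intends: the lemma is obtained by merging Lemma~\ref{ordre2.0} (translated to initial time $t_n$) with the second-approach expansion of $\mathcal{J}_1$ given just before, precisely as Corollary~\ref{cor:2scheme_FD} was obtained in Section~\ref{sec:stab1} by merging Lemmas~\ref{ordre2.0} and~\ref{stab2.1}. The paper in fact states this lemma without proof, so your algebraic bookkeeping (tracking the factor $-i\cdot i$ and combining $e^{i\tau\Delta}e^{-is_1\Delta}=e^{i(\tau-s_1)\Delta}$) is all that is required.
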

%
%
To stabilize the term appearing in the second line of equation  \eqref{unstab2}, which is of the form $\tau^2 \mathcal{C}[f,i\Delta](v, \bar{v}, V)$, we introduce an appropriate filter operator which we denote by $\Psi$. More precisely, we will construct a filter operator of the form
$$
\Psi = \psi(i\tau|\nabla|),
$$
where $\psi$ is a suitably chosen filter function 
which allows to stabilize the scheme while introducing an error term which only requires $H^2$-regularity on the initial data and potential.
Namely, we require the filter function $\psi$ to introduce the same optimal local error of $O(\tau^{3} \Delta (v+V))$ as is introduced by the low-regularity second order scheme up to this step (see equations  \eqref{iterint1}, \eqref{com-2} and Section \ref{sec:localO2} for the thorough analysis). We refer to \cite{LW} for an introduction to filter functions in the ODE setting.
We now present two sufficient assumptions on the filter operator which once established, guarantees the stability of the low-regularity scheme \eqref{unstab2}.

{\bf Assumption 1}. The filter operator $\Psi = \psi(i\tau |\nabla|)$, satisfies the following bound
\be\label{A1}
||\tau \Psi [\mathcal{C}[f,i\Delta](v, \bar{v}, V)] ||_{r} \le C_{r,d, V} ||v||_r^{m}
\ee
for some $m = m(f) \in \mathbb{N}$ and $r = r(d)\ge 0$.

{\bf Assumption 2}. The filter operator $\Psi = \psi(i\tau |\nabla|)$ satisfies the following expansion
\be\label{A2}
\Psi[\mathcal{C}[f,i\Delta](v, \bar{v}, V)] = \mathcal{C}[f,i\Delta](v, \bar{v}, V) + O(\tau |\nabla|^2 (v+V)).
\ee
The condition in Assumption 1 guarantees the stability of the scheme in the $H^r$-norm, while 
the condition in Assumption 2 preserves the optimal local error structure of $O(\tau^{3} |\nabla|^2 (v+V))$ with the inclusion of the filter function $\psi$. This is an essential ingredient for the local and global error analysis of the scheme.
\begin{rem} \label{bilinEstStab}
The stability estimate \eqref{A1} relies on the algebraic structure of the underlying space. In the following stability analysis we will restrict our attention to sufficiently smooth Sobolev spaces $H^{r}$ with $r> \frac{d}{2} + 1$.
This allows us to exploit the following classical bilinear estimate,
$$
||vw||_{s_0} \le C ||v||_{s_0}||w||_{s_0},
$$
where $s_0 = r - 1$.
An analysis in a lower order Sobolev space would require the use of more refined estimates on the commutator term using the generalized Leibniz rule (see \cite[Chapter 2]{BCD}). This analysis is not detailed here, since we tackle the error analysis of the second order scheme \eqref{2scheme_FD}, based upon the first approach (see Section \ref{sec:stab1}).
\end{rem}
%

A choice of filter operator which is well adapted for the second order scheme \eqref{unstab2} is the following.

\begin{lem}
The filter operator 
\begin{equation}\label{ffp2}
\Psi = \varphi_1(i\tau |\nabla|) := \frac{e^{i \tau |\nabla|}-1}{i \tau |\nabla|}
\end{equation}
satisfies Assumption 1 and 2 with $r>\frac{d}{2}+1$.
\end{lem}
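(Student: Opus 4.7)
The plan is to exploit the Fourier-multiplier nature of $\Psi = \varphi_1(i\tau|\nabla|)$. Its symbol is $\varphi_1(i\tau|k|) = \frac{e^{i\tau|k|}-1}{i\tau|k|}$, so the operator $\tau\Psi$ has symbol $\frac{e^{i\tau|k|}-1}{i|k|}$, bounded pointwise by $\min(\tau,2/|k|)$. Splitting frequencies at $|k|=1$ (the $\tau$ bound handles low frequencies, while the $1/|k|$ bound converts a $|k|^{2r}$ weight into $|k|^{2(r-1)}$ at high frequencies), I would obtain the uniform smoothing estimate
\begin{equation*}
\|\tau\Psi g\|_{H^r}\le C\|g\|_{H^{r-1}}, \qquad r\ge 1,\ \tau\in(0,1],
\end{equation*}
i.e.\ $\tau\Psi$ effectively gains one spatial derivative. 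This is the key mechanism behind stability.

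To verify Assumption 1, I would combine this smoothing estimate with the explicit expression of the commutator from \eqref{com}, namely $\mathcal{C}[f,i\Delta](v,\bar v, V) = -2i(\nabla V \cdot \nabla v + |\nabla v|^2\bar v + \nabla(v^2)\cdot\nabla\bar v)$. Since $r>\frac{d}{2}+1$, the space $H^{r-1}$ is an algebra (as used in Remark \ref{bilinEstStab}), so each of the three summands can be bounded as a product of gradients of $H^r$-functions; composing with the smoothing estimate then yields the bound in \eqref{A1} with cubic dependence on $\|v\|_{H^r}$ and a constant depending on $\|V\|_{H^r}$. For Assumption 2, I would use the Taylor identity $\varphi_1(z)-1=z\,\varphi_2(z)$, with $\varphi_2$ bounded on $i\mathbb{R}$, to obtain the exact operator identity
\begin{equation*}
\Psi - I = i\tau|\nabla|\,\varphi_2(i\tau|\nabla|).
\end{equation*}
Since $\varphi_2(i\tau|\nabla|)$ is a uniformly bounded Fourier multiplier, this is an $O(\tau)$ operator that loses one derivative. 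Applying it to $\mathcal{C}[f,i\Delta](v,\bar v, V)$, which already contains two data-derivatives, produces an error that---multiplied by the $\tau^2$ prefactor of the commutator term in the scheme---is of order $\tau^3$ with the regularity loss described in \eqref{A2}.

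The main subtlety, which I expect to be the main obstacle, is the symbolic reading of \eqref{A2}: it does not assert a tight Taylor remainder but rather that the filter introduces no worse regularity loss than the commutator term it stabilizes, so that the overall optimal local error $O(\tau^3|\nabla|^2(v+V))$ advertised after \eqref{A2} is preserved. Once this interpretation is fixed, both verifications reduce to straightforward bounded-Fourier-multiplier estimates, and the regularity threshold $r>\frac{d}{2}+1$ enters only through the algebra property used in $H^{r-1}$.
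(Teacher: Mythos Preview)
Your proposal is correct and follows essentially the same route as the paper. For Assumption~1 the paper writes $\tau\Psi = (e^{i\tau|\nabla|}-1)(i|\nabla|)^{-1}$ directly, bounds $|e^{i\tau|\nabla|}-1|\le 2$, and uses that $|\nabla|^{-1}$ trades $H^r$ for $H^{r-1}$ before invoking the algebra property of $H^{r-1}$ on the explicit commutator \eqref{com}; your frequency splitting via $\min(\tau,2/|k|)$ is the same mechanism, only phrased more carefully at low frequencies. For Assumption~2 the paper simply Taylor-expands $\varphi_1(i\tau|\nabla|)=1+O(\tau|\nabla|)$, which is exactly your identity $\Psi-I=i\tau|\nabla|\varphi_2(i\tau|\nabla|)$ with $\varphi_2$ bounded; your reading of \eqref{A2} as a formal regularity-loss statement matches the paper's own informal treatment.
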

\begin{proof}
We first show how the filter function \eqref{ffp2} satisfies Assumption 1 and hence guarantees the stability of the second order low-regularity scheme.
By definition of the $\varphi_1$ function and using the explicit form of the commutator \eqref{com} together with the bilinear estimate we have, 
\begin{align*}
||\tau \varphi_1(i\tau |\nabla|) \mathcal{C}[f,i\Delta](v, \bar{v}, V) ||_{r} 
&\le || (e^{i\tau |\nabla|} -1) |\nabla|^{-1} \mathcal{C}[f,i\Delta](v,\bar{v}, V) ||_{r}\\
&\le 4||\nabla V \cdot \nabla v + |\nabla v|^2 \bar{v} + 2v\nabla v \cdot \nabla \bar{v}||_{r-1}\\
&\le C_{r,d}(||\nabla V||_{r-1} ||\nabla v||_{r-1} + ||\nabla v ||_{r-1}^2 ||v||_{r-1}
\\& \quad+ 2||v||_{r-1} ||\nabla v||_{r-1}^2)\\
&\le C_{r,d}( ||V||_r ||v ||_r + 3 ||v||_{r}^3)\\
&\le C_{r,d, V} (||v||_r + ||v||_r^3).
\end{align*}
%
Furthermore, by a simple Taylor's expansion we have that, 
\begin{equation}\label{A2varphi_1}
\varphi_1(i\tau |\nabla|) = 1 + O(\tau |\nabla|).
\end{equation}
It then follows by the form of the commutator \eqref{com} that the filter function \eqref{ffp2} satisfies Assumption 2. Indeed, from the above equation we have,
$$
\varphi_1(i\tau \nabla) \mathcal{C}[f,i\Delta](v, \bar{v}, V) = \mathcal{C}[f,i\Delta](v, \bar{v}, V) + O(\tau |\nabla| \mathcal{C}[f,i\Delta](v, \bar{v}, V)),
$$
where by equation \eqref{com} we have that formally $O(\tau |\nabla| \mathcal{C}[f,i\Delta](v, \bar{v}, V)) =  O(\tau |\nabla|^2(v + V))$.
Hence, formally we see that the inclusion of the filter function \eqref{ffp2} preserves the optimal local error, by only requiring two additional derivatives on the initial datum and the potential (see Proposition \ref{LocalS2}, in the regime $r> \frac{d}{2}$).
 \end{proof}
The following Corollary provides a {\it stable} second order low-regularity scheme for \eqref{evGP}, by using the filter function \eqref{ffp2}.
\begin{cor}\label{cor:stab2} The exact solution $u$ of \eqref{evGP} can be expanded as
\be\label{stab2}
\begin{split}
u(t_n + \tau) 
&= e^{i\tau \Delta}u(t_n) - i \tau e^{i\tau \Delta}\left(u(t_n) \varphi_1(-i \tau \Delta ) V + u(t_n)^2 \varphi_1(-2i \tau\Delta)\bar{u}(t_n) \right) \\
&-i\tau^2e^{i\tau \Delta} \varphi_1(i\tau \nabla)[ \mathcal{C}[f,i\Delta](u(t_n),\varphi_2(-2i\tau\Delta)\bar{u}(t_n), \varphi_2(-i\tau \Delta)V)]\\
&-\frac{\tau^2}{2} e^{i\tau \Delta}(|u(t_n)|^4u(t_n) + 3u(t_n) |u(t_n)|^2V - |u(t_n)|^2u(t_n)\bar{V} + u(t_n)V^2 ) \\
&+ \mathcal{R}^2_2(\tau, t_n).
\end{split}
\ee
where
\begin{align*}
\mathcal{R}^2_2(\tau, t_n) 
&= \int_0^\tau e^{i(\tau-\zeta) \Delta}[ f(u(t_n +\zeta), \bar{u}(t_n+ \zeta), V) - f(e^{i\zeta}u(t_n), e^{-i\zeta}\bar{u}(t_n), V) ] d\zeta \\
& + e^{i\tau \Delta} \int_0^\tau \zeta (|u(t_n)|^4u(t_n) + 3u(t_n) |u(t_n)|^2V - |u(t_n)|^2u(t_n)\bar{V} + u(t_n)V^2 ) d\zeta\\
&+ \int_0^\tau \int_0^\zeta \int_0^se^{i(\tau -s_1)\Delta} \mathcal{C}^2[f,i\Delta](e^{is_1 \Delta}u(t_n), e^{is_1\Delta}e^{-2i\zeta\Delta}\bar{u}(t_n),  e^{is_1\Delta}e^{-i\zeta\Delta}V)ds_1ds d\zeta\\
&-i\tau^2e^{i\tau\Delta}(I - \varphi_1(i\tau \nabla))[ \mathcal{C}[f,i\Delta](u(t_n),\varphi_2(-2i\tau\Delta)\bar{u}(t_n), \varphi_2(-i\tau \Delta)V)].
\end{align*}
\end{cor}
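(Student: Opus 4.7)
The plan is to obtain \eqref{stab2} directly from the unfiltered expansion \eqref{unstab2} of the preceding lemma, by inserting the filter operator $\varphi_1(i\tau\nabla)$ in front of the commutator term and absorbing the resulting defect into the remainder. No new oscillatory integral has to be analyzed: the entire derivation reduces to an algebraic rearrangement followed by a bookkeeping of error terms.

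Concretely, I would first isolate in \eqref{unstab2} the unstable second-line contribution
$-i\tau^2 e^{i\tau\Delta}\mathcal{C}[f,i\Delta](u(t_n),\varphi_2(-2i\tau\Delta)\bar{u}(t_n),\varphi_2(-i\tau\Delta)V)$
and add and subtract its filtered counterpart
$-i\tau^2 e^{i\tau\Delta}\varphi_1(i\tau\nabla)[\mathcal{C}[f,i\Delta](u(t_n),\varphi_2(-2i\tau\Delta)\bar{u}(t_n),\varphi_2(-i\tau\Delta)V)]$.
The filtered version is kept inside the numerical flow, where it produces exactly the second line of \eqref{stab2}, while the difference
$-i\tau^2 e^{i\tau\Delta}(I-\varphi_1(i\tau\nabla))[\mathcal{C}[f,i\Delta](\cdots)]$
is moved into the remainder. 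A direct term-by-term comparison with the statement then shows that the first three contributions to $\mathcal{R}^2_2(\tau,t_n)$ coincide with those of $R^2_{2,2}(\tau)$ from the preceding lemma, and the last line of $\mathcal{R}^2_2(\tau,t_n)$ is precisely the filter defect just introduced.

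Since the argument is purely algebraic, the identity \eqref{stab2} itself presents no genuine obstacle. The substantive point—that introducing $\varphi_1(i\tau\nabla)$ does not spoil the order of the scheme—is already captured by Assumption~2 and the Taylor expansion \eqref{A2varphi_1}, both of which have just been verified for $\Psi=\varphi_1(i\tau|\nabla|)$. The only mildly delicate issue one may want to check in passing is that the filter defect, when estimated in $H^r$, indeed behaves like $\tau^3|\nabla|^2(u(t_n)+V)$ and therefore matches the consistency order already established for the other contributions to $\mathcal{R}^2_2$; this, however, is deferred to the quantitative local error analysis in Section \ref{sec:localO2} rather than being part of the present corollary.
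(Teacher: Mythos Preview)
Your proposal is correct and is exactly the approach the paper intends: the corollary is stated without proof because it follows immediately from the preceding lemma \eqref{unstab2} by adding and subtracting the filtered commutator term, keeping the filtered version in the scheme and absorbing the defect $-i\tau^2 e^{i\tau\Delta}(I-\varphi_1(i\tau\nabla))[\mathcal{C}[f,i\Delta](\cdots)]$ into the remainder. Your identification of $\mathcal{R}^2_2(\tau,t_n)$ as $R^2_{2,2}(\tau)$ plus this filter defect is precisely the content of the corollary.
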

\subsection{Local error estimates}\label{sec:localO2}
In this section we prove that the second order scheme \eqref{2scheme_FD} introduces a local error of order three under favorable regularity assumptions of the initial datum and potential. 
As was the case for the error analysis of the first-order scheme (Section \ref{sec:LocalS1}), we make the analysis in $H^{r}$-norm where the regularity assumptions on $v$ and $V$ will depend on the regime of $r$ considered.
%
\begin{prop}\label{LocalS2}
Let $T> 0$, $r \ge 0$, and $r_2$ as in Theorem \ref{Global2}, namely
\begin{equation*}
r_2 = 
\begin{cases}
r + 2, \ \ \text{if} \ r > \frac{d}{2},\\
2+ \frac{d}{2} + \epsilon, \ \text{if} \ 0 < r \le \frac{d}{2},\\
2 + \frac{d}{4},  \ \ \text{if} \ r = 0,\\ 
\end{cases}
\end{equation*}
where $0 < \epsilon < \frac{1}{4}$ can be arbitrarily small.
Assume there exists $C_T>0$ such that
\begin{equation*}
\sup_{[0,T]}||u(t)||_{H^{r_2}} \le C_T, \ \text{and} \ \ 
||V||_{H^{r_2}} \le C_T,
\end{equation*}
then there exists $M_T >0$ 
such that for every $\tau \in (0,1]$
$$||\mathcal{R}^1_2(\tau, t_n)||_{H^{r}} \le M_T \tau^3, \quad 0 \le t_n \le T,
$$
where $t_n = n\tau$, and $\mathcal{R}^1_2(\tau, t_n)$ is given in equation  \eqref{R^1_2}.
\end{prop}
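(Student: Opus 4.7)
The plan is to follow the template of the proof of Proposition \ref{LocalS1}, splitting $\mathcal{R}^1_2(\tau,t_n)$ into the four pieces appearing in \eqref{R^1_2}. The first two come from $R_{2,0}$ of Lemma \ref{ordre2.0}: each is only $O(\tau^2)$ individually, but their sum is $O(\tau^3)$ thanks to the Taylor identity \eqref{last_term} built into the construction of the scheme. The last two come from $R^1_{2,1}$ of Lemma \ref{stab2.1} and are triple integrals of $\mathcal{C}^2[f,i\Delta]$ over a region of volume $O(\tau^3)$, which I would bound by a pointwise estimate of the second-order commutator.

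First I would handle pieces 3 and 4. Since $e^{is\Delta}$ is an $H^r$-isometry, the task reduces to bounding $\|\mathcal{C}^2[f,i\Delta](u,v,w)\|_{H^r}$ in terms of $H^{r_2}$-norms. The explicit expression \eqref{com-2} is a sum of products involving at most second derivatives, so a case analysis parallel to \eqref{com_bound} yields the needed estimate: for $r>\frac{d}{2}$ the algebra property of $H^r$ gives the bound in terms of $H^{r+2}$-norms; for $0<r\le\frac{d}{2}$ the bilinear estimate \eqref{bilin-nonsmooth} with $\sigma=\sigma_0$ from \eqref{sigma_0} gives the bound in terms of $H^{\sigma_0+2}$-norms, and one verifies $r_2>\sigma_0+2$; for $r=0$ the embedding $H^{d/4}\hookrightarrow L^4$ applied to each second-derivative factor combined with $H^{\sigma_0}\hookrightarrow L^\infty$ yields the sharp $H^{2+d/4}$-bound. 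Multiplying by the $O(\tau^3)$ integration volume gives an $O(\tau^3)$ contribution.

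The more delicate step is showing that pieces 1 and 2 sum to $O(\tau^3)$, where the cancellation is already encoded in \eqref{secondOrder}--\eqref{iterint1}. To quantify it, I would use Duhamel to write $u(t_n+\zeta)-e^{i\zeta\Delta}u(t_n)=-i\int_0^\zeta e^{i(\zeta-s)\Delta}f(u(t_n+s),\bar u(t_n+s),V)\,ds$, Taylor-expand $f$ about $(e^{i\zeta\Delta}u(t_n),e^{-i\zeta\Delta}\bar u(t_n),V)$ using the derivatives \eqref{derivs}, and invoke \eqref{last_term}. The $O(\zeta)$ part of piece 1 then integrates to precisely the negative of piece 2, and the residuals split into three types: the quadratic Taylor remainder of $f$ controlled by \eqref{A2.2}; commutator-type errors arising from freezing $e^{\pm i\zeta\Delta}$ to the identity, controlled by the bound \eqref{com_bound} on $\mathcal{C}[f,i\Delta]$; and second-order commutator contributions controlled by the bound on $\mathcal{C}^2$ established in the previous paragraph. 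Integration over $[0,\tau]$ turns each into an $O(\tau^3)$ contribution.

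The main obstacle will be keeping the exponent $r_2$ optimal in the non-smooth regimes $r\le\frac{d}{2}$, especially in the $L^2$ case where the sharp value $r_2=2+\frac{d}{4}$ demands careful use of $H^{d/4}\hookrightarrow L^4$ applied to the second-derivative factors of $\mathcal{C}^2$, mirroring the $L^2$-analysis at the end of the proof of Proposition \ref{LocalS1}. Assembling the four bounds then yields $\|\mathcal{R}^1_2(\tau,t_n)\|_{H^r}\le M_T\tau^3$ for every $\tau\in(0,1]$ and $0\le t_n\le T$, as required.
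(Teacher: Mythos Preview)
Your proposal is correct and follows essentially the same route as the paper. Both split $\mathcal{R}^1_2$ into the four pieces of \eqref{R^1_2}, bound the two $\mathcal{C}^2$-integrals by the case analysis in the three regimes (with $r_2=r_1+1$), and exploit the cancellation between the first two pieces via \eqref{last_term}. The only organizational difference is in how the cancellation is extracted: the paper writes $u(t_n+\zeta)=e^{i\zeta\Delta}u(t_n)+\zeta f^n+\tilde R(\zeta,t_n)$ and performs two successive Taylor expansions of $f$ (first about $a_1=e^{i\zeta\Delta}u(t_n)+\zeta f^n$, then from $a_1$ to $e^{i\zeta\Delta}u(t_n)$), bounding $\|\tilde R\|_{H^r}\le C\zeta^2$ via the fundamental theorem of calculus and the equation; you instead expand $f$ directly about $(e^{i\zeta\Delta}u(t_n),e^{-i\zeta\Delta}\bar u(t_n),V)$ and treat the freezing of the propagators afterwards. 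One small imprecision: the error from freezing $e^{\pm i\zeta\Delta}$ to the identity is an $O(\zeta\Delta)$-term acting on a polynomial in $u,\bar u,V$, so it costs two derivatives rather than being literally controlled by the first-order commutator bound \eqref{com_bound}; this is harmless since $H^{r_2}$-regularity is available, and the paper handles the analogous step in the same informal way.
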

\begin{proof} 
We write the error term $\mathcal{R}^1_2(\tau, t_n)$, as the sum of four terms, $\mathcal{R}^1_2(\tau, t_n) = \mathcal{E}^1(\tau,t_n) + \mathcal{E}^2(\tau,t_n) +  \mathcal{E}^3(\tau,t_n) + \mathcal{E}^4(\tau,t_n)$. We start by establishing the third order estimate for the two last terms $\mathcal{E}^3$ and $\mathcal{E}^4$. These bounds are obtained using the same arguments as those made to bound the term $\mathcal{G}^2$ in the proof of Proposition \ref{LocalS1}, by noticing that $r_2 = r_1+1$. Indeed, using the inequality \eqref{bilin-nonsmooth}, and the explicit expression of the second-order commutator \eqref{com-2} we obtain that for $r \ge 0$ and $\sigma > \frac{d}{2}$,
\be
||\mathcal{C}^2[f,i\Delta](u,v,w)||_{H^{r}} \le C(||u||_{H^{\sigma+2}}, ||v||_{H^{\sigma+2}}, ||u||_{H^{r+2}}, ||v||_{H^{r+2}},||w||_{H^{r+2}}).
\ee
Following the proof of Proposition \ref{LocalS1}, in the regime $r > \frac{d}{2}$ one can conclude by taking $\sigma = r$, and in the regime $0<r \le \frac{d}{2}$ by taking $\sigma = \sigma_0$, where $\sigma_0$ is defined in equation  \eqref{sigma_0}, and by recalling from  equation  \eqref{sig_0_b} that $r + 2 < \sigma_0 + 2 < r_2 = \frac{d}{2} + \epsilon + 2$. Finally the case $r = 0$ again follows by using the embedding $H^{\frac{d}{4}} \hookrightarrow L^4$.
Next, we show that the sum of the remaining terms, $(\mathcal{E}^1+ \mathcal{E}^2)(\tau, t_n)$, of equation  \eqref{R^1_2} is of third order.
We have that $r_2 > 2$, and hence $u(t), V \in H^2$. Hence, by Taylor expanding the exponential appearing inside the Duhamel's integral \eqref{Duhamel} we obtain the following expansion :
$u(t_n + \zeta) = e^{i\zeta \Delta}u(t_n) + \zeta f^n + \tilde{R}(\zeta, t_n)$
where $f^n = f(u(t_n),\bar{u}(t_n), V)$ and 
\be\label{R_2,a}
\tilde{R}(\zeta, t_n) = \int_0^\zeta e^{i(\zeta-s)\Delta} f(u(t_n+s), \bar{u}(t_n+s),V)ds - \zeta f^n.
\ee
%
%
Using the above expansion for $u$ we rewrite the error term $(\mathcal{E}^1+ \mathcal{E}^2)(\tau,t_n)$ as,
\begin{align}\label{E^1}
(\mathcal{E}^1 + \mathcal{E}^2)(\tau, t_n) = \int_0^\tau e^{i(\tau-\zeta) \Delta}[ f\left(e^{i\zeta \Delta}u(t_n) + \zeta f^n + \tilde{R}(\zeta, t_n), \right.&\left. e^{-i\zeta \Delta}\bar{u}(t_n) + \zeta \overline{f^n} + \overline{\tilde{R}(\zeta, t_n)}, V\right) \\ \nonumber
&  - f(e^{i\zeta\Delta}u(t_n), e^{-i\zeta\Delta}\bar{u}(t_n), V) ] d\zeta \\ \nonumber
& - e^{i\tau \Delta} \int_0^\tau \zeta (D_{1}f^n \cdot f^n + D_{2}f^n\cdot \overline{f^n}) d\zeta,
\end{align}
where $D_{1}f^n\cdot f^n + D_{2}f^n\cdot \overline{f^n}$ is given by equation  \eqref{last_term}.
For notational convenience we let $a_1 := e^{i\zeta \Delta}u(t_n)+ \zeta f^n$.
The idea in order to show that the above error term \eqref{E^1} is of third-order revolves around making three suitable Taylor expansions. 
%
%
By Taylor expanding $f$ around $(a_1, \bar{a}_1, V)$ and $(e^{i\zeta\Delta} u(t_n),e^{-i\zeta\Delta} \bar{u}(t_n),V)$ respectively we obtain,
\begin{align}\label{Taylor12}
f\left(a_1 + \tilde{R}(\zeta, t_n), \bar{a}_1 + \overline{\tilde{R}(\zeta, t_n)}, V\right) = f\left(a_1, \bar{a}_1, V\right) + E_{1}(\zeta), \\ \nonumber
f\left(a_1, \bar{a}_1, V\right) = f\left(e^{i\zeta\Delta} u(t_n), e^{-i\zeta\Delta} \bar{u}(t_n), V\right) + E_{2}(\zeta),
\end{align}
where
\begin{align}\label{E1}
&E_1(\zeta) = \int_0^1 D_1f\left(a_1 + \theta \tilde{R}(\zeta,t_n), \bar{a}_1 + \theta\overline{\tilde{R}(\zeta,t_n)}, V \right)\cdot \tilde{R}(\zeta,t_n) \\ \nonumber
&\qquad\qquad\qquad+ D_2 f\left( a_1 + \theta \tilde{R}(\zeta,t_n), \bar{a}_1 + \theta\overline{\tilde{R}(\zeta,t_n)}, V \right)\cdot \overline{\tilde{R}(\zeta,t_n)}d\theta, \\\label{E2}
&E_2(\zeta) = \zeta\int_0^1 [D_1f\left(e^{i\zeta\Delta} u(t_n)+ \theta\zeta f^n,e^{-i\zeta\Delta} \bar{u}(t_n)+\theta\zeta\overline{f^n}, V\right)\cdot f^n \\ \nonumber
& \qquad\qquad\qquad + D_2f\left(e^{i\zeta\Delta} u(t_n)+ \theta\zeta f^n,e^{-i\zeta\Delta} \bar{u}(t_n)+\theta\zeta\overline{f^n}, V\right)\cdot \overline{f^n} ]d\theta.
\end{align}
Hence, plugging equation  \eqref{Taylor12} into equation \eqref{E^1} yields,
\begin{align}\label{ErrTaylored}
(\mathcal{E}^1+\mathcal{E}^2)(\tau, t_n) =&
 \int_0^\tau e^{i(\tau - \zeta)\Delta} E_{1}(\zeta) d\zeta
+ e^{i\tau \Delta}\int_0^\tau e^{-i\zeta \Delta}E_2(\zeta) d\zeta -  e^{i\tau \Delta}\int_0^\tau  \zeta(D_{1}f^n \cdot f^n + D_{2}f^n\cdot \overline{f^n})d\zeta.
\end{align}
In order to show that the first term in the above equation is of third-order we show the bound $||E_{1}(\zeta) ||_{r} \le C_T\zeta^2$.
By equation \eqref{E1} and by using the bilinear inequality \eqref{bilin-nonsmooth} we have that for all $\sigma >\frac{d}{2}$ and $r \ge 0$,
%
\be\label{E1.1_norm}
\begin{split}
||E_{1}(\zeta)||_{r} 
&\le \sup_{\theta \in ]0,1[} \biggl( ||D_1f \left(a_1 + \theta \tilde{R}(\zeta,t_n), \bar{a}_1 + \theta\overline{\tilde{R}(\zeta,t_n)}, V \right)||_{\sigma}\\
&\qquad\qquad+ ||D_2f \left(a_1 + \theta \tilde{R}(\zeta,t_n), \bar{a}_1 + \theta\overline{\tilde{R}(\zeta,t_n)}, V \right)||_{\sigma} \biggr) 
||\tilde{R}(\zeta, t_n) ||_{r} \\
&\le C_{r}\biggl(||V||_{\sigma} , \ \sup_{t \in [0,T]} ||u(t)||_{\sigma}, \sup_{(\zeta, t)\in [0,\tau]\times [0,T] } ||\tilde{R}(\zeta, t)||_{\sigma}\biggr)
||\tilde{R}(\zeta,t_n) ||_{r}
\end{split}
\ee
where the last inequality follows by using the explicit form of the derivatives \eqref{derivs}, the first estimation of equation \eqref{A2.2}, and the fact that $e^{i\zeta\Delta}$ is an isometry on Sobolev spaces.
Next, we show that,
\be\label{E2}
\sup_{(\zeta,t)\in  [0,\tau]\times[0,T]} ||\tilde{R}(\zeta,t) ||_{\sigma_0} < +\infty, \quad \text{and} \quad
||\tilde{R}(\zeta,t_n) ||_{r} \le C_T \zeta^2,
\ee
where $\sigma_0$ is given by equation \eqref{sigma_0}.
%
%
%
We obtain the first bound by using the first estimate of equation \eqref{A2.2} on $f$ with $r = \sigma_0$,
$$
\sup_{(\zeta, t)\in  [0,\tau]\times[0,T]}||\tilde{R}(\zeta,t) ||_{\sigma_0} \le \tau C_{\sigma_0}(\sup_{t \in [0,T]}||u(t)||_{\sigma_0}, ||V||_{\sigma_0}) < +\infty.
$$
Next, we obtain the second estimate in equation \eqref{E2} using the Fundamental Theorem of Calculus. By letting $u = u(t_n+s_1)$ we have,
%
\begin{align}\label{FTC}
\tilde{R}(\zeta, t_n) 
&= \int_0^\zeta \int_0^s \partial_{s_1}\left(e^{i(\zeta-s_1)\Delta} f(u, \bar{u},V) \right) ds_1ds + \zeta e^{i\zeta\Delta}f^n - \zeta f^n \\ \nonumber
& = \int_0^\zeta \int_0^s e^{i(\zeta-s_1)\Delta} \biggl( i\Delta f(u,\bar{u},V) + D_1f(u,\bar{u},V) \left(i\Delta u + f(u,\bar{u},V)\right) \biggr. \\ \nonumber
& \biggl.\quad+ D_2 f(u,\bar{u},V) (-i\overline{\Delta u} +\overline{f(u,\bar{u},V)}) \biggr) ds_1 ds + i\zeta^2\varphi_1(i\zeta\Delta)\Delta f(u,\bar{u},V),
\end{align}
where to obtain the second line we used equation \eqref{evGP}, and for the last term we used the definition of the $\varphi_1$ operator.
The second estimate of equation \eqref{E2} then follows immediately from equation \eqref{FTC} by observing that,
$$
||\Delta f(v,\bar{v},V) ||_{r} + ||D_i f(v,\bar{v},V) \Delta u||_{r} \le C_{r}(||v ||_{r_2}, ||V ||_{r_2}), \quad i= 1,2.
$$

It remains to show that the difference of the second and third term in equation \eqref{ErrTaylored} is of third-order. Formally, this directly follows by making a Taylor expansion of $e^{i\zeta\Delta}u(t_n) + \theta\zeta f^n$ around $\zeta=0$: $e^{i\zeta\Delta}u(t_n) + \theta\zeta f^n = u(t_n) + O(\zeta\Delta u(t_n))$. By using the same application of the Fundamental Theorem of Calculus as done in equation \eqref{FTC} one obtains this third order bound. This concludes the proof.
%
%
\end{proof}
%
%
%
%
%
\subsection{Global error estimate}\label{Gerr2_section}
Using the local error estimates established in the preceding section together with a stability argument we show global second order convergence of our low regularity integrator under the regularity assumptions established in Proposition \ref{LocalS2}.
\begin{proof}[Proof of Theorem \ref{Global2}]
We let $u^{n+1} = \Phi_{\text{num},2}^{\tau}(u^n)$ be the numerical scheme defined in equation \eqref{second-order}.
The outline of the proof of this second-order convergence result follows exactly the same lines as the first order convergence result given in Theorem \ref{Global1}. Indeed, in the case where $r > \frac{d}{2}$ the global error estimate follows by a classical {\it Lady Windermere's argument} (\cite{LW}). In the regime $r \le \frac{d}{2}$, by exploiting the same interpolation argument as made in the proof of Theorem \ref{Global1}, and by using definition \eqref{R^1_2} of $\mathcal{R}^1_2$ we have that there exists an $\delta > 0$ such that
$$
||\mathcal{R}_2^1(\tau,t_n)||_{H^{\sigma_0}} \le M_T\tau^{1+\delta},
$$
where $\sigma_0$ is defined in equation \eqref{sigma_0}.
From the above we obtain the bound \eqref{global-estim3} and hence the apriori $H^{\sigma_0}$-bound on the iterates: $\sup_{n\tau\le T} ||u^n||_{H^{\sigma_0}} < +\infty$, for $\tau$ sufficiently small. Using the local error analysis given in Proposition \ref{LocalS2} we obtain second order convergence of the scheme  \eqref{second-order} by performing an inductive argument with
\be
||e^{n+1}||_{H^{r}} \le M_T\tau^3 + e^{L_n \tau}||e^n||_{H^{r}}, \quad e^0=0,
\ee
since $\displaystyle \sup_{n\tau \le T} L_n\le C_{T, r}(||u^n||_{H^{\sigma_0}}) < + \infty$. This concludes the proof.
%
%
%
\end{proof}
 \section{ Numerical Experiments}\label{sec:numexp}
In this section we provide some numerical experiments to support our theoretical convergence results. We consider the Gross-Pitaevskii equation \eqref{evGP} with an initial data of the form
\be\label{ic:num}
u_0(x) = \sum_{k\in\mathbb{Z}}(1+|k|)^{-\vartheta - \frac{1}{2}}a_ke^{ikx} ,
\ee
where the coefficients $(a_k)_{k\in \mathbb{Z}}$ are chosen as uniformly distributed random complex numbers in $[0,1] + i [0,1]$, using the mathlab function rand. The parameter $\vartheta \ge 0$ dictates the regularity assumption on the above function \eqref{ic:num}, namely it insures that $u_0 \in H^{\vartheta}$.
We choose the potential $V$ to have the same form \eqref{ic:num}, as the initial condition.

For the space discretization, we couple the first and second order low-regularity time integrators \eqref{first-order} and \eqref{second-order} with a standard Fourier pseudo-spectral method. We take the largest Fourier mode as $K = 2^{10}$, yielding a spatial mesh size of $\Delta x = 0.0061$.

In order to test our convergence result in each of the three regimes $r= 0 $, $0<r\le \frac{d}{2}$, and $r>\frac{d}{2}$, we choose to measure the error in the (discrete) $L^2$, $H^{\frac{1}{2}}$, and $H^1$ norms. For each of these three norms we plot the first and second order low regularity integrators given in equations \eqref{first-order} and \eqref{second-order} for $u_0, V \in H^{r_1}$ and $u_0, V\in H^{r_2}$ respectively (see equations \eqref{r_1} and \eqref{r_2}). The error at time $T=1$ are given in Figure \eqref{fig:num}.
The results of our numerical experiments agree with the corresponding theoretical convergence results: we observed first and second order convergence for the regularity assumptions given in Theorem \ref{Global1} and \ref{Global2}. Moreover, as expected, the yellow lines in Figure \ref{fig:num} show how the second order scheme exhibits order reduction for the less regular data and potential $u_0,V \in H^{r_1}$. Nevertheless, it successfully converges to second order for $u_0,V\in H^{r_2}$. We lastly note that the observed convergence is slightly better than predicted by Theorem \ref{Global1} and \ref{Global2} (see as well the work of \cite{OS1}).
%
%
\begin{figure}
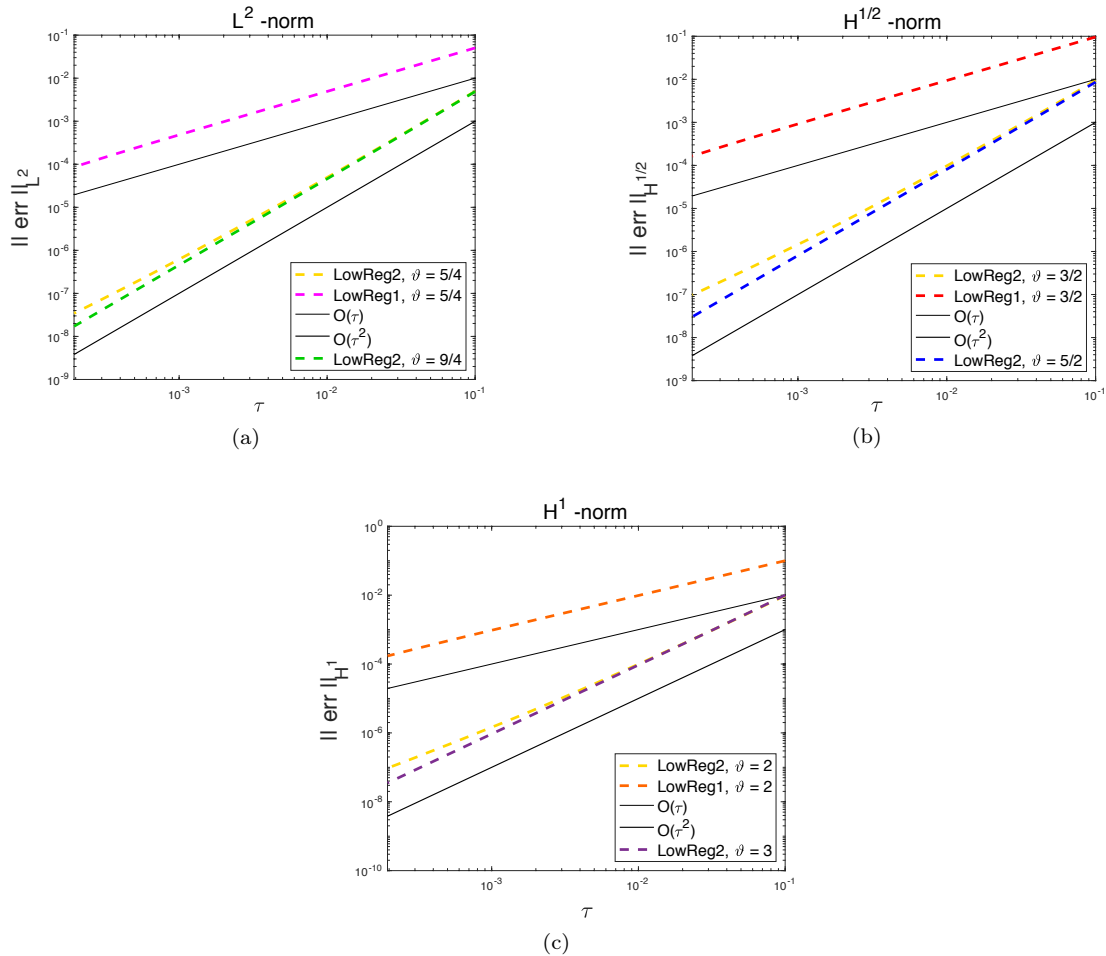

\begin{minipage}{.5\linewidth}
\centering
\subfloat[]{\label{main:a}\includegraphics[scale=.43]{final_T0-1o1_L2normH1-25datas}}
\end{minipage}%
\begin{minipage}{.5\linewidth}
\centering
\subfloat[]{\label{main:b}\includegraphics[scale=.43]{final_T0-1H0-5normH1-5datas}}
\end{minipage}\par\medskip
\centering
\subfloat[]{\label{main:c}\includegraphics[scale=.43]{final1t0-1o1_H1normH2datas}}

\caption{
Convergence plots for three different norms. The slopes of the continuous lines are one and two, respectively. Plot (a) : first and second order scheme with $u_0, V \in H^{5/4}$ (pink and yellow), and for the second-order scheme with $u_0, V \in H^{9/4}$ (green). Plot (b): first and second order scheme with $u_0, V \in H^{\frac{3}{2}}$ (red and yellow), and for the second order scheme with $u_0, V \in H^{5/2}$ (blue). Plot (c): first and resp. second order schemes with $u_0, V \in H^2$ (orange and yellow) and second order with $u_0,V \in H^3$ (purple).
}
\label{fig:num}
\end{figure}

\subsection*{Acknowledgements}
{\small
This project has received funding from the European Research Council (ERC) under the European Union's Horizon 2020 research and innovation programme (grant agreement No. 850941).
The author would like to express her thanks to Katharina Schratz for helpful discussions, and to the anonymous referees for their insightful comments and suggestions.
}  
\section*{Appendix}
In this section we derive the bilinear $H^r$-estimates, for $r> 0$, as stated in Section \ref{notation}. Namely we show the estimate \eqref{bilin-nonsmooth} in both regimes $r> \frac{d}{2}$, and $0<r\le \frac{d}{2}$. 
The notation and tools we use are based upon Littlewood-Paley Theory (see \cite[Chapter 2]{BCD}). We will apply this machinery to our study on the torus $\mathbb{T}^d$. 
Given any tempered distribution $u$, the Littlewood-Paley theory provides the following decomposition,
$$
u = \sum_{k \ge -1} \Delta_k u, \quad \mathcal{F}(\Delta_ku)(\xi) = \varphi_k(\xi)\hat{u}(\xi),
$$
where $\varphi_k(\xi) = \varphi(\xi/ 2^k),$ for $k \ge 0$, $\varphi_{-1} = \chi $, and $\varphi, \chi$ satisfy the assertions in \cite[Proposition 2.10]{BCD}, namely they form a dyadic partition of unity.
In the above and in the remainder of this section we use the fact that, as is the case when working on $\mathbb{R}^d$, one can make use of the Fourier transform on $\mathbb{T}^d$, where in the periodic case we have that $\xi \in \mathbb{Z}^d$.

Using the above Littlewood-Paley decomposition, we introduce Bony's decomposition in order to express the product of two tempered distributions $uv$ as the following sum of three terms,
\begin{equation}\label{bony}
uv = T_u(v) + T_v(u) + R(u,v)
\end{equation}
where $\displaystyle  T_u(v) = \sum_{j} S_{j-1}u\Delta_{j}v$, $\displaystyle S_{j-1}u = \sum_{i\le j-2}\Delta_iu$, and the remainder $\displaystyle R(u,v) = \sum_{|k-j|\le 1}\Delta_ku \Delta_j v$.

In what follows we shall make use of the embedding $H^r \hookrightarrow B^{r-\frac{d}{2}}_{\infty, \infty}$, for non homogeneous Besov spaces, (see \cite[Proposition 2.71]{BCD}).

We are now ready to demonstrate the estimate \eqref{bilin-nonsmooth} in the regime $0<r\le\frac{d}{2}$. Let $\epsilon > 0$. First by \cite[Proposition 2.85]{BCD} together with the embedding $H^{\frac{d}{2}} \hookrightarrow B^{0}_{\infty, \infty}$, 
we have the following estimate of the remainder,
$$
||R(u,v)||_{H^r} \le C_{r,d}||v||_{H^r}||u||_{H^{\frac{d}{2}}}, \quad r > 0.
$$
%
Next, by using the first estimate of \cite[Theorem 2.82]{BCD} together with the embedding $H^{\frac{d}{2} + \epsilon} \hookrightarrow L^{\infty}$
we have the following estimate on the paraproduct of $v$ by $u$,
$$
||T_{u}v||_{H^r} \le ||u||_{H^{\frac{d}{2}+\epsilon}}||v||_{H^r}, \quad r>0.
$$
Using the second estimate of \cite[Theorem 2.82]{BCD} we obtain that for any $0<r\le \frac{d}{2}$ the following estimate of the paraproduct of $u$ by $v$ holds,
\begin{equation*}
||T_{v}u||_{H^r} \le
\begin{cases}
||u||_{H^{\frac{d}{2}}}||v||_{H^r}, \quad \text{for} \ 0<r<\frac{d}{2},\\
||u||_{H^{\frac{d}{2}+\epsilon}}||v||_{H^{\frac{d}{2}}}, \quad \text{if }\ r=\frac{d}{2},
\end{cases}
\end{equation*}
where we used the embedding $H^r \hookrightarrow B^{r-d/2}_{\infty,\infty}$, 
and respectively
$H^{\frac{d}{2} - \epsilon} \hookrightarrow B^{-\epsilon}_{\infty,\infty}$ in the case $r=\frac{d}{2}$. 
%
Therefore, when $0<r\le \frac{d}{2}$, by collecting the above bounds, together with the decomposition \eqref{bony}, we recover the estimate \eqref{bilin-nonsmooth}:
$$
||uv||_{H^r} \le C_{r,d} ||u||_{\frac{d}{2}+\epsilon}||v||_{H^r}.
$$

Finally, we show the estimate \eqref{bilin-nonsmooth} in the regime $r> \frac{d}{2}$, with $\sigma = r$.
By \cite[Corollary 2.86]{BCD} we have,
$$
||uv||_{H^r} \le \frac{C^{r+1}}{r}(||u||_{L^\infty}||v||_{H^r} + ||u||_{H^r}||v||_{L^\infty}), \quad r > 0.
$$
Hence, given any $r> \frac{d}{2}$, using the embedding $H^{r} \hookrightarrow L^\infty$, we obtain the claimed estimate \eqref{bilin-nonsmooth}, with $\sigma = r$.

\end{document}